\newtheorem{theorem}{Theorem}[section]
\newtheorem{lem}{Lemma}[section]
\newtheorem{de}{Definition}[section]
\begin{document}

\begin{center}
{\Large  On critical exponents of a $k$-Hessian equation in the whole space}
\end{center}
\vskip 5mm

\begin{center}
{\sc Yun Wang and Yutian Lei} \\
\vskip2mm
Institute of Mathematics,
School of Mathematical Sciences,\\
Nanjing Normal University,
Nanjing, 210023, China
\vskip 5mm
\end{center}

\vskip 5mm {\leftskip5mm\rightskip5mm \normalsize
\noindent{\bf{Abstract}} In this paper, we study negative
classical solutions and stable solutions of the following
$k$-Hessian equation
$$
F_k(D^2V)=(-V)^p \quad in~R^n
$$
with radial structure, where $n \geq 3$, $1<k<n/2$ and $p>1$. This
equation is related to the extremal functions of the Hessian
Sobolev inequality on the whole space. Several critical exponents
including the Serrin type, the Sobolev type, and the
Joseph-Lundgren type, play key roles in studying existence and
decay rates. We believe that these critical exponents still come
into play to research $k$-Hessian equations without radial
structure.

\par
\noindent{\bf{Keywords}}: $k$-Hessian equation, stable solution,
critical exponent, Liouville theorem, decay rate
\par
{\bf{MSC2010}}: 35B33, 35J60 }

\newtheorem{proposition}[theorem]{Proposition}

\renewcommand{\theequation}{\thesection.\arabic{equation}}
\catcode`@=11
\@addtoreset{equation}{section}
\catcode`@=12

\section{Introduction}

In 1990, Tso \cite{Tso1} studied the relation between the value of
exponent $p$ and the existence results for the $k$-Hessian
equation $F_k(D^2V)=(-V)^p$ in bounded domains. The critical
exponent $p=\frac{(n+2)k}{n-2k}$ plays a key role. Those results
are associated with the extremal functions of the Hessian Sobolev inequality for all
$k$-admissible functions which was introduced by Wang in
\cite{Wang}. Such an inequality with the critical exponent still
holds in the whole space $R^n$, and the extremal functions are
radially symmetric (cf. \cite{PDE}, \cite{TW}).

Consider the Euler-Lagrange equation
\begin{equation} \label{hesse}
F_k(D^2V)=(-V)^p, \quad V<0~in~R^n,
\end{equation}
with a general exponent $p>1$, where $n \geq 3$, $1<k<n/2$. Here
$F_k[D^2V]=S_k(\lambda(D^2V))$,
$\lambda(D^2V)=(\lambda_1,\lambda_2,\cdots,\lambda_n)$ with
$\lambda_i$ being eigenvalues of the Hessian matrix $(D^2V)$, and
$S_k(\cdot)$ is the $k$-th symmetric function:
$$
S_k(\lambda)=\sum_{1\leq i_1<\cdots<i_k \leq n}
\lambda_{i_1}\lambda_{i_2}\cdots\lambda_{i_k}.
$$
According to the conclusions in \cite{CNS}, $V<0$ ensures that the
main part of (\ref{hesse}) is elliptic. Namely, we always consider
the $k$-admissible solutions in the cone
$$
\Phi^k:=\{u \in C^2(R^n);F_s(D^2V) \geq 0, s=1,2,\cdots,k\}.
$$
Such an equation does not only come into play to study the
extremal functions of the Hessian Sobolev inequality, but also is
helpful to investigate the global existence and blow-up in finite
time span for the fully nonlinear parabolic equations (such as the
equations studied in \cite{IL}, \cite{Ren} and \cite{WL}).

A special case is $F_1[D^2V]=\Delta V$, and (\ref{hesse})
becomes the Lane-Emden equation
\begin{equation} \label{LE}
-\Delta u=u^p,  \quad u>0~in~R^n.
\end{equation}
The existence results of the solutions of this equation have
provided an important ingredient in the study of conformal
geometry, such as the extremal functions of the Sobolev
inequalities and the prescribing scalar curvature problem. It was
studied rather extensively. According to Theorem 3.41 in
\cite{Ni}, (\ref{LE}) has no positive solution even on exterior
domains when $p$ is not larger than the Serrin exponent (i.e. $p
\in (1,\frac{n}{n-2})$). The Liouville theorem in \cite{GS} shows
that (\ref{LE}) has no positive classical solution in the
subcritical case (i.e. $p \in [1,\frac{n+2}{n-2})$). In the
critical case (i.e. $p=\frac{n+2}{n-2}$), the positive classical
solutions of (\ref{LE}) must be of the form
\begin{equation} \label{wlty}
u(x) =c(\frac{t}{t^2+|x-x^*|^2})^{\frac{n-2}{2}}
\end{equation}
with constants $c,t>0$, and $x^* \in R^n$ (cf. \cite{CL}). In supercritical case
(i.e. $p>\frac{n+2}{n-2}$), existence and asymptotic behavior of positive solutions
are much complicated and not completely understood. In fact, we can find cylindrical shaped
solutions which does not decay along some direction.
In addition, there are radial solutions with the slow decay rates solving (\ref{LE}) (cf.
\cite{GS}, \cite{Joseph}, \cite{WangXF} and many others).
Furthermore, those radial solutions are of the form
$$
u(x)=\mu^{\frac{2}{p-1}}U(\mu |x|), \quad x \in R^n,
$$
where $\mu=u^{\frac{p-1}{2}}(0)$, and $U(r)$ is the unique
solution of
$$
 \left \{
   \begin{array}{l}
      -(U''+\frac{n-1}{r}U')=U^p,      \quad U(r)>0, ~r>0\\
      U'(0)=0, \quad U(0)=1.
   \end{array}
   \right.
$$

For the study of `stable' positive solutions of (\ref{LE}), the
Joseph-Lundgren exponent
$$
p_{jl}(n):=1+\frac{4}{n-4-2\sqrt{n-1}}
$$
plays an important role (cf. \cite{GNW}). Such an exponent is also
essential to describe how the radial solutions intersect with the
singular radial solution and with themselves (cf. \cite{Joseph}).
In addition, this Joseph-Lundgren exponent can be
used to study the Morse index for the sign-changed solutions of
the Lane-Emden equation (cf. \cite{AF}) and other nonlinear
elliptic equations with supercritical exponents (cf. \cite{AL} \cite{ED} and
\cite{GW}).

In this paper, our purpose is to study the relation between the
critical exponents and existence of kinds of solutions of $k$-Hessian
equation (\ref{hesse}). As the beginning of the study, we are
concerned about the increasing negative solution
of (\ref{hesse}) with radial structure as in \cite{PDE} and
\cite{YM}. Thus, (\ref{hesse}) is reduced to the
following equation
\begin{equation}\label{1.1}
-\frac{1}{k}C_{n-1}^{k-1}(r^{n-k}|u'|^{k-1}u')'=r^{n-1}u^{p},
\quad u(r)>0\quad as\quad r>0.
\end{equation}
Here $u(r)=u(|x|)=-V(x)$, $n \geq 3$, $1<k<n/2$  and $p>1$.
In fact, in the critical case (i.e. $p=\frac{(n+2)k}{n-2k}$),
the extremal functions of the Hessian Sobolev inequality
are radially symmetric (cf. \cite{PDE}, \cite{TW} and \cite{Wang}).
In the noncritical case,
it is clearer and more concise to study the critical exponents of
the radial solutions. We believe that the ideas are helpful to investigate the corresponding
problems of the solutions with general form, and those critical exponents
still come into play in the study of $k$-Hessian equations without radial structure.

\subsection{Regular solutions}

Clearly, (\ref{1.1}) has a singular solution
\begin{equation}\label{US}
u_{s}(r)=Ar^{-\frac{2k}{p-k}},\quad with \quad
A:=(\frac{1}{k}C^{k-1}_{n-1})^{\frac{1}{p-k}}(\frac{2k}{p-k})^{\frac{k}{p-k}}
(n-\frac{2pk}{p-k})^{\frac{1}{p-k}}.
\end{equation}
If write $V(x)=-u_s(|x|)$, then $V(x)$ only belong to
$C^2(R^n\setminus \{0\})$ (even it does not belong to $L_{loc}^\infty(R^n)$).

We are mainly concerned with the $k$-admissible solutions of
(\ref{hesse}). Consider the following boundary values problem
\begin{equation}\label{1.6}
 \left \{
   \begin{array}{l}
      -\frac{1}{k}C_{n-1}^{k-1}(r^{n-k}|u'|^{k-1}u')'=
      r^{n-1}u^{p},\quad u(r)>0, ~r>0\\
      u'(0)=0, \quad u(0)=\rho(:=\mu^{\frac{2k}{p-k}}) >0.
   \end{array}
   \right.
\end{equation}
\begin{de}
If a solution $u(r)$ of (\ref{1.6}) satisfies $u(|x|) \in C^2(R^n)$,
then $u(r)$ is called a {\it regular} solution.
\end{de}

Recall two critical exponents:
Serrin exponent $p_{se}:=\frac{nk}{n-2k}$, and Sobolev exponent $p_{so}:
=\frac{(n+2)k}{n-2k}$.

When $p$ is not larger than the Serrin exponent,
(\ref{hesse}) has no negative $k$-admissible solution (cf.
\cite{Lei2}, \cite{Ou} and \cite{PV2}). Thus, we always assume in
this paper that $p$ is larger than the Serrin exponent
\begin{equation} \label{serrin}
p>p_{se}.
\end{equation}

In the critical case (i.e. $p=p_{so}$), all the
regular solutions of (\ref{1.6}) can be written as the explicit form (cf.
Remark 1.4 in \cite{YM})
\begin{equation} \label{expl}
u_\rho(r)=(\frac{1}{k}C_{n-1}^{k-1})^{\frac{1}{p-k}}
\rho(1+\frac{k}{n^{1/k}(n-2k)}(\rho^{\frac{k+1}{n-2k}}r)^2)^{-\frac{n-2k}{2k}}.
\end{equation}
Therefore, we will be concerned with the noncritical cases.

\begin{theorem} \label{th1.1}
When $p<p_{so}$,  (\ref{1.6}) has no regular solution.
\end{theorem}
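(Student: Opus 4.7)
The plan is to prove a Pohozaev-type identity on the ball of radius $R$, show that its boundary terms vanish as $R\to\infty$ via an a priori decay bound, and thereby reach a contradiction. Assume for contradiction that (\ref{1.6}) admits a regular solution $u$, and set $v:=-u'$. Integrating (\ref{1.1}) from $0$ to $r$ and using $u'(0)=0$ gives $u'(r)<0$, so $u$ is strictly decreasing; the integrated form $\frac{1}{k}C_{n-1}^{k-1}r^{n-k}v^k=\int_0^r s^{n-1}u^p\,ds$ then forces $u(r)\to 0$ as $r\to\infty$ (otherwise $v(r)$ grows at least linearly, contradicting positivity of $u$). Multiplying (\ref{1.1}) by $ru'$ and integrating on $(0,R)$ with integration by parts produces a Pohozaev-type identity with boundary terms at $R$, while multiplying (\ref{1.1}) by $u$ and integrating gives a second identity that expresses $\int_0^R r^{n-k}v^{k+1}\,dr$ in terms of $\int_0^R r^{n-1}u^{p+1}\,dr$ and one boundary term. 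Eliminating $\int_0^R r^{n-k}v^{k+1}\,dr$ between the two yields
\begin{equation*}
K(p)\int_0^R r^{n-1}u^{p+1}\,dr=\frac{R^n u(R)^{p+1}}{p+1}+\frac{Ck}{k+1}R^{n-k+1}v(R)^{k+1}+\frac{C(2k-n)}{k+1}u(R)R^{n-k}v(R)^k,
\end{equation*}
with $C=\frac{1}{k}C_{n-1}^{k-1}$ and $K(p)=\frac{k(n+2)+p(2k-n)}{(p+1)(k+1)}$; a short algebraic check confirms $K(p)>0$ precisely when $p<p_{so}$.

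The main obstacle is controlling the three boundary terms as $R\to\infty$. To this end I would first establish the a priori pointwise decay $u(r)\leq Cr^{-2k/(p-k)}$, which matches the singular-solution rate of $u_s$. Using the integrated equation and the monotonicity of $u$, the right-hand side is bounded below by $c\,u(r)^p r^n$, which gives $v(r)\geq c\,r\,u(r)^{p/k}$; integrating the resulting differential inequality $-u'\geq c\,r\,u^{p/k}$ (which requires $p>k$, a consequence of $p>p_{se}$) yields the claimed bound. Substituting back into the integrated equation produces the companion bound $v(r)\leq C r^{-(p+k)/(p-k)}$. A direct exponent computation, keyed on the equivalence $p(n-2k)-k(n+2)<0 \iff p<p_{so}$, shows that each of the three boundary terms above is of order $R^{[p(n-2k)-k(n+2)]/(p-k)}$, hence tends to $0$ as $R\to\infty$; the same bounds also guarantee $\int_0^\infty r^{n-1}u^{p+1}\,dr<\infty$. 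Passing to the limit in the identity forces $K(p)\int_0^\infty r^{n-1}u^{p+1}\,dr=0$, which contradicts $u>0$ since $K(p)>0$ in this range. Hence no regular solution of (\ref{1.6}) exists when $p<p_{so}$.
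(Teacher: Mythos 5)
Your proposal is correct and shares the paper's skeleton --- a Pohozaev identity from multiplying by $ru'$ (the paper's (\ref{li9})) combined with the energy identity from multiplying by $u$ (the paper's (\ref{hjia})), with the contradiction coming from the sign of $\frac{n}{p+1}-\frac{n-2k}{k+1}$ when $p<p_{so}$ --- but it handles the boundary terms by a genuinely different and somewhat leaner device. The paper first proves $\int_0^\infty r^{n-k}|u'|^{k+1}dr<\infty$ via a cut-off/Young-inequality argument (its Step 2, leading to (\ref{li6})) and then extracts a subsequence $R_j\to\infty$ along which $R_j^{n-k+1}|u'(R_j)|^{k+1}+R_j^nu^{p+1}(R_j)\to 0$ as in (\ref{huij}); you instead derive the pointwise bounds $u(r)\leq Cr^{-2k/(p-k)}$ and $|u'(r)|\leq Cr^{-(p+k)/(p-k)}$ directly from the once-integrated ODE and monotonicity, which makes every boundary term decay like $R^{[p(n-2k)-k(n+2)]/(p-k)}\to 0$ along the full limit $R\to\infty$, so the cut-off lemma becomes unnecessary. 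Your derivation of the upper bound on $u$ is essentially the ODE version of the Wolff-potential estimate the paper invokes for (\ref{jie}), and is more self-contained in the radial setting; your exponent computations for $K(p)$ and for the three boundary terms check out. The one point to make explicit is that your argument uses $p>k$ (to integrate $-u'\geq cru^{p/k}$) and, for the bound on $|u'|$, the positivity of $n-\frac{2kp}{p-k}$; both follow from the paper's standing assumption (\ref{serrin}) that $p>p_{se}$, which you correctly flag, so the theorem is proved on the range $p_{se}<p<p_{so}$ exactly as in the paper.
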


\paragraph{Remark 1.1.}
By a direct calculation, when $p_{se}<p<p_{so}$, besides $u_s$ given by
(\ref{US}), (\ref{1.1}) has other
singular solutions $U_s(r)$ satisfying $U_s(r)/u_s(r) \to 1$ as $r \to 0$
and $U_s(r)r^{\frac{n-2k}{k}} \to \lambda>0$ as $r \to \infty$.
When $k=1$, this result can be found in \cite{GS}, \cite{Joseph}, \cite{WangXF}.

\begin{theorem} \label{th1.3}
When $p > p_{so}$, all the positive regular solution $u_\mu$ of (\ref{1.6})
satisfies $u_\mu(r) \simeq r^{-\frac{2k}{p-k}}$ for large $r$. Furthermore,
they are the form of
\begin{equation}\label{xiu0}
u_\mu(r)=\mu^{\frac{2k}{p-k}}u_1(\mu r), \quad r \geq 0,
\end{equation}
where $u_1(r)$ is the solution of
\begin{equation}\label{jia6}
 \left \{
   \begin{array}{l}
      -\frac{1}{k}C_{n-1}^{k-1}(r^{n-k}|u'|^{k-1}u')'=
      r^{n-1}u^{p},\quad u(r)>0, ~r>0\\
      u'(0)=0, \quad u(0)=1.
   \end{array}
   \right.
\end{equation}
\end{theorem}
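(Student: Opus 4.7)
The plan is to address the two assertions separately: the scaling representation follows from an ODE invariance and uniqueness, and the asymptotic profile reduces (via that representation) to analyzing the single function $u_1$ by an Emden--Fowler transformation and phase-plane arguments.

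For the scaling, equation (\ref{1.1}) is invariant under $u(r) \mapsto \mu^{2k/(p-k)} u(\mu r)$, as a direct substitution shows: the exponents of $r$ and the power on $u$ conspire exactly when the scaling weight is $2k/(p-k)$. Hence if $u_1$ solves (\ref{jia6}), then $u_\mu(r) := \mu^{2k/(p-k)} u_1(\mu r)$ is a regular solution of (\ref{1.6}) with $u_\mu(0) = \mu^{2k/(p-k)}$. Existence and uniqueness of $u_1$ follow from the contraction mapping theorem applied to the Volterra reformulation
$$
u(r) = 1 - \int_0^r \left(\frac{k}{C_{n-1}^{k-1}} s^{k-n} \int_0^s \tau^{n-1} u^p(\tau)\, d\tau\right)^{1/k} ds
$$
on a short initial interval, then continued along the ODE. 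Combining uniqueness with the scaling invariance forces every positive regular solution of (\ref{1.6}) with $u(0) = \rho$ to coincide with $u_\mu$ for $\mu = \rho^{(p-k)/(2k)}$.

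By the scaling, it suffices to show $u_1(r) \simeq r^{-\alpha}$ at infinity, where $\alpha := 2k/(p-k)$. Setting $v(t) := r^\alpha u_1(r)$ and $\phi(t) := -r^{\alpha+1} u_1'(r)$ with $t = \ln r$, a direct computation reduces (\ref{1.1}) to the autonomous planar system
$$
\dot v = \alpha v - \phi, \qquad \frac{d}{dt}(\phi^k) = \frac{k}{C_{n-1}^{k-1}} v^p - (n - p\alpha) \phi^k
$$
on the open quadrant $\{v, \phi > 0\}$, whose unique non-trivial equilibrium $(A, \alpha A)$ corresponds exactly to the singular solution $u_s$ in (\ref{US}). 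The regular orbit emerges from $(0, 0)$ as $t \to -\infty$, and the desired asymptotic is equivalent to its convergence to $(A, \alpha A)$ as $t \to +\infty$. Linearizing there yields trace $\frac{(n-2k)(p_{so}-p)}{p-k}$ and determinant $\frac{\alpha(n-p\alpha)(p-k)}{k}$, which have the signs required for local asymptotic stability precisely when $p > p_{so}$.

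Upgrading local to global convergence is the heart of the proof and the main obstacle. I would combine two ingredients: first, an a priori trapping derived from the first integral $\frac{1}{k} C_{n-1}^{k-1} r^{n-k} |u_1'|^k = \int_0^r s^{n-1} u_1^p \, ds$ together with $u_1 \leq 1$ and $u_1' < 0$, which confines $(v, \phi)$ to a bounded region of the positive quadrant; second, a Pohozaev-type identity obtained by multiplying (\ref{1.1}) by $r u_1' + \frac{n-2k}{k} u_1$ and integrating over $[0, R]$, whose bulk contribution has a strict sign precisely when $p > p_{so}$ and thereby rules out the alternative ``fast decay'' $u_1 \sim c r^{-(n-2k)/k}$ (for which the boundary contribution vanishes). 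The Pohozaev computation for the $k$-Hessian is the delicate part: the nonlinearity $|u'|^{k-1} u'$ requires careful integration by parts and bookkeeping of boundary terms to isolate the sign-determining factor proportional to $p - p_{so}$. With these ingredients the trapped orbit must converge to the locally stable equilibrium, yielding $u_1(r) r^\alpha \to A$ and hence $u_\mu(r) \simeq r^{-\alpha}$ at infinity via the scaling.
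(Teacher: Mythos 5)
Your scaling step coincides with the paper's (Step 2 of its proof), and your phase-plane setup is internally consistent: the autonomous system, the equilibrium $(A,\alpha A)$, and the trace $\tfrac{(n-2k)(p_{so}-p)}{p-k}$ and determinant $\tfrac{\alpha(n-p\alpha)(p-k)}{k}$ all check out. But the step you yourself call ``the heart of the proof'' is a genuine gap, not a technicality. Boundedness of the orbit, local asymptotic stability of $(A,\alpha A)$, and exclusion of the fast-decay asymptotics do not together force convergence to $(A,\alpha A)$: by Poincar\'e--Bendixson a bounded planar orbit that avoids the other equilibrium can still accumulate on a periodic orbit, or oscillate with $\liminf v=0<\limsup v$ (its $\omega$-limit set touching the boundary of the quadrant). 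What is missing is a monotone Lyapunov functional along orbits --- for $k=1$ this is the classical energy whose $t$-derivative is a signed multiple of $\dot v^{\,2}$ when $p\neq p_{so}$ --- and for the quasilinear nonlinearity $|u'|^{k-1}u'$ the vector field fails to be $C^1$ where $\phi=0$ and the corresponding energy argument is delicate. This is not a hypothetical difficulty: Remark 2.1 of the paper records that Miyamoto's phase-plane analysis gives $u_\mu/u_s\to 1$ only for $p\ge p^*$, while the theorem must cover all $p>p_{so}$; your route would have to close exactly the range $p_{so}<p<p^*$ where the phase-plane method is not known to succeed. Note also that you are proving a statement ($v\to A$) strictly stronger than what the theorem asserts (two-sided bounds), and that stronger statement is left open in that range by the paper itself.

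The paper's actual argument avoids the phase plane. The upper bound $u\le Cr^{-2k/(p-k)}$ comes from the Wolff-potential two-sided estimate of Labutin and Phuc--Verbitsky (Lemma 2.1). The crucial lower bound $u\ge cr^{-2k/(p-k)}$ is obtained by contradiction: if it fails, a bootstrap of the Wolff-potential estimate (Lemma 2.2) combined with a comparison against the barrier $c_*r^{-2k/(p-k)-\epsilon_0}$ via the monotonicity inequality for $|a|^{k-1}a$ (Lemma 2.3) upgrades the decay all the way to $r^{-(n-2k)/k}$; then $V\in L^{p+1}(R^n)\cap C^2(R^n)$, and the finite-energy Liouville-type theorem of \cite{Lei2} (in substance a Pohozaev identity) forces $p=p_{so}$, a contradiction. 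Your Pohozaev ingredient plays the same role as that final step, but without the bootstrap lemmas you cannot pass from the failure of the lower bound to integrability, so the contradiction is not reachable along your sketch as written. If you want to salvage the dynamical route, you must either construct the Lyapunov functional for the quasilinear system and handle its degeneracy, or retreat to the weaker two-sided conclusion and supply an argument excluding $\liminf_{t\to\infty}v(t)=0$ directly.
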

Here, $u(r) \simeq r^{-\theta}$ means that there exists $C>1$ such
that $\frac{1}{C}\leq u(r)r^{\theta}\leq C$ for large $r$.

\paragraph{Remark 1.2.} Problem (\ref{jia6}) has a entire solution when $p>p_{so}$.
In fact, by a standard argument of contraction, (\ref{jia6}) has a
unique local positive solution $u$ (cf. Proposition 2.1 in \cite{YM}).
There holds $u'<0$ as long as $u>0$ (see the proof of Lemma \ref{lem2.1}).
Extend this local solution rightwards. Then $u>0$ for all $r>0$.
Otherwise, it contradicts with the Liouville theorem in \cite{Tso1}.

\subsection{Stable solutions}

\begin{de}
We say that a positive solution $u\in C^{1}(0,\infty)$ of
(\ref{1.1}) is stable if
\begin{equation}\label{weak1}
\int_0^\infty
[\frac{1}{k}C_{n-1}^{k-1}r^{n-k}|u'|^{k-1}u'\varphi'-
      r^{n-1}u^{p}\varphi]dr=0;
\end{equation}
\begin{equation}\label{weak2}
Q_{u}(\varphi):=  C_{n-1}^{k-1}\int^{\infty}_{0}
r^{n-k}|u'|^{k-1}(\varphi')^2dr-p\int^{\infty}_{0}
r^{n-1}u^{p-1}\varphi^2 dr \geq 0
\end{equation}
for all $\varphi \in W_*$, where
$W_*=\{\varphi(r);\varphi(r)=\phi(x)\in C^{\infty}_
{c}(R^{n}),r=|x|\}$.

Similarly, a positive solution $u\in C^{1}(0,\infty)$ of
(\ref{1.1}) is stable on a set $(R,\infty)$ for some $R>0$, if
(\ref{weak1}) holds for all $\varphi \in W_*$,
and (\ref{weak2}) holds for all $\varphi \in C_c^\infty(R,\infty)$.
\end{de}

Indeed, the fact that the first order Fr$\acute{e}$chet derivative of the
functional $J(u)$ is equal to zero and the second order Fr$\acute{e}$chet derivative
is nonnegative can lead to this definition, where
$$
J(u)=\frac{C_{n-1}^{k-1}}{k(k+1)}\int_0^\infty |u'|^{k+1}r^{n-k}dr
-\frac{1}{p+1}\int_0^\infty u^{p+1}r^{n-1}dr.
$$
In addition, $Q_u(\varphi) \geq 0$ can also be obtained by linearizing (\ref{1.1}).

It is not difficult to verify that the regular solutions $u_\rho$
given by (\ref{expl}) and $u_\mu$ given by (\ref{xiu0}) satisfy (\ref{weak1}).
For the singular solution $u_s$ expressed by (\ref{US}), $p>p_{se}$ implies that $0$ is not
the singular point in integral terms of (\ref{weak1}) (see the proof of Theorem \ref{th1.5}).
Therefore, $u_s$ also satisfies (\ref{weak1}).

Recall other two critical exponents: the Joseph-Lundgren exponent
$$
p_{jl}=\left\{
 \begin{array}{lll}
 &\displaystyle  \infty, & if N\leq2k+8,\\[3mm]
 &\displaystyle\frac{k[n^2-2(k+3)n+4k]
+4k\sqrt{2(k+1)n-4k}}{(n-2k)(n-2k-8)}, & if N > 2k+8;
  \end{array}
 \right.
$$
and
$$
p^*=k\frac{n+2k}{n-2k}.
$$
Clearly, $p_{se}<p_{so}<p_{jl}$.
In addition, $p_{so}<p^*$ by virtue of $1<k<n/2$. In view of
$2k(k^2+6k+1)/(k-1)^2>2k+8$, we can
deduce the relation between $p^*$ and $p_{jl}$ as follows
$$\begin{array}{ll}
&p^* \geq p_{jl}, \quad if ~n \geq 2k(k^2+6k+1)/(k-1)^2;\\
&p^* < p_{jl}, \quad if ~n < 2k(k^2+6k+1)/(k-1)^2.
\end{array}
$$

Under the scaling transformation, $p=p_{so}$ ensures that equation (\ref{hesse})
and energy $\|\cdot\|_{p+1}$ are invariant (cf \cite{Lei2}), and
$p=p^*$ ensures that equation (\ref{hesse}) and energy $\|\cdot\|_{p+k}$ are
invariant (cf \cite{LL}). In addition, $p^*$ is essential to study the
separation property of solutions (see the following Remark).

\paragraph{Remark 1.3.} Let $u_{\mu}(r)$ be a regular solution of (\ref{1.6}).
Corollary 1.7 in \cite{YM} implies that, when $p \geq
\max\{p^*,p_{jl}\}$, $u_\mu(r)<u_s(r)$ for $r>0$, and
$u_{\mu_1}(r)<u_{\mu_2}(r)$ for $r>0$ as long as $\mu_1<\mu_2$.

The exponent $p^*$ also appears in the study of $\gamma$-Laplace
equations (cf. \cite{LLM} and \cite{YM}) and integral equations
involving Wolff potentials (cf. \cite{ChenLi}, \cite{MCL},
\cite{SL} and \cite{Vi}). In particular, it plays an important
role to investigate integrability, decay rates and intersection
properties of the positive entire solutions. In addition, this
exponent ensures that equation and energy $\|\cdot\|_{p+\gamma-1}$ are
invariant under the scaling transformation (cf \cite{LL}).

In particular, for the $\gamma$-Laplace equation
\begin{equation} \label{gamma}
-div(|\nabla u|^{\gamma-2}\nabla u)=K(x)u^p,
\quad  u>0 \quad in ~R^n,
\end{equation}
we write $p_{se}(\gamma)=\frac{n(\gamma-1)}{n-\gamma}$,
$p_{so}(\gamma)=\frac{n\gamma}{n-\gamma}-1$,
$p^*(\gamma)=\frac{n+\gamma}{n-\gamma}(\gamma-1)$,
$p_{jl}=\gamma-1+\gamma^2[n-\gamma-2-2\sqrt{(n-1)/(\gamma-1)}]^{-1}$
as $n>\frac{\gamma(\gamma+3)}{\gamma-1}$, and $p_{jl}=\infty$ as
$n \leq \frac{\gamma(\gamma+3)}{\gamma-1}$.

If $\gamma \in (1,2)$, $p_{se}(\gamma)< p^*(\gamma) < p_{so}(\gamma)$.
When $K(x) \equiv 1$, according to the Liouville theorem in \cite{SZ},
(\ref{gamma}) has no positive solution as $p<p_{so}(\gamma)$,
and $p^*(\gamma)$ does not make sense. When $K(x)$ is a double
bounded function, according to the result in \cite{LL},
(\ref{gamma}) has positive radial solutions as long as $p>p_{se}(\gamma)$.
Now, $p^*$ comes into play in studying integrability and decay rates
of positive solutions.


\vskip 5mm
Now, we state the results about the stable solutions.

\begin{theorem} \label{th1.4}
When $p<p_{jl}$, (\ref{1.1}) has no stable solution.
\end{theorem}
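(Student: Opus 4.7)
The plan is to adapt Farina's stability-integral argument to the radial $k$-Hessian setting. I would first substitute the test function $\varphi=u^{\alpha}\eta$ into the stability inequality (\ref{weak2}), with $\eta=\zeta^{m}$ for a radial cutoff $\zeta$ equal to $1$ on $[0,R]$ and $0$ on $[2R,\infty)$ with $|\zeta'|\leq C/R$, $m$ a sufficiently large integer, and $\alpha$ a real parameter to be optimized. Since $u$ is smooth on $(0,\infty)$, the composite $u^{\alpha}\eta$ is a Lipschitz radial function of compact support, and the stability inequality extends from $W_{*}$ by standard approximation; for a singular solution an additional cutoff near $r=0$ is inserted, after which $p>p_{se}$ guarantees convergence of all boundary integrals. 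Simultaneously I would use the weak form (\ref{weak1}) with test function $u^{2\alpha-1}\eta^{2}$ to rewrite $p\int_{0}^{\infty}r^{n-1}u^{p+2\alpha-1}\eta^{2}\,dr$ as a combination of weighted kinetic terms and subtract from the stability inequality. The resulting identity is
\[
\Bigl[\tfrac{p(2\alpha-1)}{k}-\alpha^{2}\Bigr]\!\int_{0}^{\infty}\!r^{n-k}|u'|^{k+1}u^{2\alpha-2}\eta^{2}\,dr \leq 2\Bigl(\tfrac{p}{k}-\alpha\Bigr)\!\int_{0}^{\infty}\!r^{n-k}|u'|^{k-1}u'u^{2\alpha-1}\eta\eta'\,dr + \int_{0}^{\infty}\!r^{n-k}|u'|^{k-1}u^{2\alpha}(\eta')^{2}\,dr,
\]
whose leading coefficient is strictly positive precisely for $\alpha\in(\alpha_{-},\alpha_{+})$ with $\alpha_{\pm}=(p\pm\sqrt{p(p-k)})/k$, a nondegenerate interval because $p>p_{se}>k$.

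Next I would apply Young's inequality twice: with conjugate exponents $((k+1)/k,\,k+1)$ on the mixed term and with $((k+1)/(k-1),\,(k+1)/2)$ on the $|u'|^{k-1}(\eta')^{2}$ term; the latter is well-posed since $k>1$. Both produce remainders of the same shape $u^{2\alpha+k-1}\eta^{1-k}|\eta'|^{k+1}$, and the choice $\eta=\zeta^{m}$ yields $\eta^{1-k}|\eta'|^{k+1}\leq Cm^{k+1}\zeta^{2m-k-1}|\zeta'|^{k+1}$, which is bounded on the annulus $[R,2R]$ provided $m\geq(k+1)/2$. Absorbing the small pieces into the left-hand side and reusing the equation to bound $\int r^{n-1}u^{p+2\alpha-1}\eta^{2}\,dr$ by the same quantity gives
\[
\int_{0}^{\infty}r^{n-1}u^{p+2\alpha-1}\zeta^{2m}\,dr \leq C\int_{R}^{2R}r^{n-k}u^{2\alpha+k-1}\zeta^{2m-k-1}|\zeta'|^{k+1}\,dr.
\]
A Hölder inequality on the right with conjugate exponents $\frac{p+2\alpha-1}{2\alpha+k-1}$ and $\frac{p+2\alpha-1}{p-k}$, together with the annular bounds $r\sim R$, $\zeta\leq 1$, $|\zeta'|\leq C/R$, delivers $\int_{0}^{\infty}r^{n-1}u^{p+2\alpha-1}\zeta^{2m}\,dr\leq CR^{\sigma(\alpha,p,n,k)}$ for an explicit exponent $\sigma$. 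Whenever $\sigma<0$, sending $R\to\infty$ and using monotone convergence forces $u\equiv 0$, contradicting positivity.

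The heart of the argument is the algebra of choosing $\alpha$. A direct simplification reduces $\sigma<0$ to the linear condition $\alpha>[p(n-2k)-k(n-2)]/(4k)$, and this lower bound lies strictly below $\alpha_{+}$ if and only if
\[
(n-2k)(n-2k-8)\,p^{2}-2k\bigl[n^{2}-2(k+3)n+4k\bigr]\,p+k^{2}(n-2)^{2}<0.
\]
The larger root of this quadratic is exactly the Joseph--Lundgren exponent $p_{jl}$ displayed in the excerpt; when $n\leq 2k+8$ the leading coefficient is nonpositive and the inequality holds for all admissible $p$, matching the convention $p_{jl}=\infty$. Thus an admissible $\alpha$ exists precisely when $p<p_{jl}$. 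The main obstacle is this algebraic matching of the integral threshold with the explicit formula for $p_{jl}$ stated in the paper; a secondary technical point is the careful handling of test functions for singular solutions via a cutoff near the origin, where one uses $p>p_{se}$ to ensure the cutoff error vanishes in the limit.
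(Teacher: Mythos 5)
Your proposal is correct and follows essentially the same route as the paper's proof: testing (\ref{weak2}) with $u^{\alpha}\eta$ and (\ref{weak1}) with $u^{2\alpha-1}\eta^{2}$ is, after the substitution $\gamma=2\alpha-1$, exactly the paper's choice of test functions $u^{(\gamma+1)/2}\psi^{m(k+1)/2}$ and $u^{\gamma}\psi^{m(k+1)}$, and your admissible window $\alpha<\alpha_{+}=(p+\sqrt{p(p-k)})/k$ coincides with the paper's condition $\gamma<(2p+2\sqrt{p(p-k)}-k)/k$. The subsequent Young--H\"older absorption, the resulting bound $CR^{\sigma}$, and the reduction of $\sigma<0$ to the quadratic (\ref{2.11}) whose larger root is $p_{jl}$ all match the paper's Steps 1 and 2.
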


\begin{theorem} \label{th1.5}
When $p \geq p_{jl}$, the singular solution $u_{s}$ given by
(\ref{US}) is a stable solution of (\ref{1.1}).
\end{theorem}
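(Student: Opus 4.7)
The plan is to verify the two stability conditions (\ref{weak1}) and (\ref{weak2}) directly for the explicit singular profile $u_s(r)=Ar^{-\alpha}$, where $\alpha:=2k/(p-k)$ and $A$ is the constant in (\ref{US}).

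First, for (\ref{weak1}), $u_s$ solves (\ref{1.1}) classically on $(0,\infty)$, so I would integrate by parts against $\varphi\in W_*$ and argue that the boundary terms vanish. At infinity this is immediate from the compact support of $\varphi$. At the origin, a direct computation gives $r^{n-k}|u_s'|^{k-1}u_s'=-(\alpha A)^k r^{n-2k-k\alpha}$ with exponent
\[
n-2k-k\alpha=\frac{(n-2k)p-nk}{p-k}>0,
\]
the strict positivity being equivalent to $p>p_{se}$. Hence the boundary contribution at $0$ vanishes and (\ref{weak1}) follows.

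For (\ref{weak2}), substituting $u_s$ gives
\[
Q_{u_s}(\varphi)=C_{n-1}^{k-1}(\alpha A)^{k-1}\int_0^\infty r^{\beta}(\varphi')^2\,dr-pA^{p-1}\int_0^\infty r^{\beta-2}\varphi^2\,dr,
\]
where $\beta:=n-2k+1-\alpha(k-1)$; the two exponents differ by exactly $2$ thanks to the identity $\alpha(p-k)=2k$. I would then invoke the sharp weighted radial Hardy inequality
\[
\int_0^\infty r^{\beta}(\varphi')^2\,dr\ \ge\ \Bigl(\tfrac{\beta-1}{2}\Bigr)^2\int_0^\infty r^{\beta-2}\varphi^2\,dr,
\]
valid for $\varphi\in W_*$ whenever $\beta>1$ (easily checked from $p\ge p_{jl}>p_{so}>k(n-2)/(n-2k)$). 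Thus (\ref{weak2}) reduces to the coefficient inequality $C_{n-1}^{k-1}\alpha^{k-1}\bigl(\tfrac{\beta-1}{2}\bigr)^2\ge pA^{p-k}$; using the formula $A^{p-k}=\tfrac{1}{k}C_{n-1}^{k-1}\alpha^{k}(n-\alpha p)$ read off from (\ref{US}) together with $\beta-1=n-\alpha(p-1)$, this becomes
\[
\bigl(n-\alpha(p-1)\bigr)^2\ \ge\ \tfrac{4p\alpha}{k}\bigl(n-\alpha p\bigr).
\]

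Substituting $\alpha=2k/(p-k)$ and clearing denominators turns this into a quadratic inequality in $p$:
\[
(n-2k)(n-2k-8)p^2-2k\bigl[n^2-2(k+3)n+4k\bigr]p+(n-2)^2k^2\ \ge\ 0.
\]
The main obstacle is verifying that the larger root of this quadratic is exactly the $p_{jl}$ from the introduction; this comes down to the algebraic identity
\[
\bigl[n^2-2(k+3)n+4k\bigr]^2-(n-2)^2(n-2k)(n-2k-8)\ =\ 16\bigl[2(k+1)n-4k\bigr],
\]
a direct (if tedious) expansion. With this identity in hand, the quadratic is nonnegative precisely when $n>2k+8$ and $p\ge p_{jl}$ (and when $n\le 2k+8$ the hypothesis $p\ge p_{jl}=\infty$ is vacuous), which completes (\ref{weak2}) and hence the stability of $u_s$.
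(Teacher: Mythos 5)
Your proposal is correct and follows essentially the same route as the paper: verify (\ref{weak1}) via the vanishing of the boundary term at the origin (using $p>p_{se}$), and reduce (\ref{weak2}) for the explicit power-law profile to the coefficient inequality $\bigl(n-2-\tfrac{2p(k-1)}{p-k}\bigr)^2/4\ \ge\ \tfrac{2p}{p-k}\bigl(n-\tfrac{2pk}{p-k}\bigr)$, which is exactly the paper's equivalence (\ref{3.15}) leading to the quadratic whose larger root is $p_{jl}$. The only cosmetic difference is that you invoke the one-dimensional sharp weighted Hardy inequality in the radial variable, whereas the paper passes to $n$-dimensional integrals and applies the Caffarelli--Kohn--Nirenberg inequality (\ref{CKN}) with $a=\tfrac{p(k-1)}{p-k}$, $b=a+1$ --- for radial test functions these are the same inequality with the same constant.
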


\begin{theorem} \label{th1.6}
When $p=p_{so}$ or $p \geq \max\{p^*,p_{jl}\}$, all the
regular solutions of (\ref{1.6}) are stable solutions
of (\ref{1.1}) on $(R,\infty)$ for some $R>0$. When $p_{se}<p<p_{so}$,
the singular solutions introduced in Remark 1.1
are stable solution of (\ref{1.1}) on $(R,\infty)$ for some $R>0$.
\end{theorem}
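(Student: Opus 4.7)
Proof proposal:

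The plan is to handle all three parts via a single strategy: quantify the asymptotic behavior of the relevant solution at infinity, then establish nonnegativity of $Q_u(\varphi)$ on test functions $\varphi \in C_c^\infty(R,\infty)$ via a weighted one-dimensional Hardy inequality.

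Parts (a) and (c) proceed by a scaling argument. When $p = p_{so}$ the explicit formula (\ref{expl}) gives $u_\rho(r) \sim c_\rho r^{-(n-2k)/k}$ at infinity, while for $p_{se}<p<p_{so}$ Remark 1.1 provides $U_s(r) \sim \lambda r^{-(n-2k)/k}$. Bootstrapping the ODE (\ref{1.1}) yields $|u'(r)| \sim c'r^{-(n-k)/k}$ in both cases. Substituting into $Q_u$, the positive term becomes a multiple of $\int r^{(n-k)/k}(\varphi')^2\,dr$, which by the one-dimensional Hardy inequality $\int_R^\infty r^a(\varphi')^2 dr \geq \bigl((a-1)/2\bigr)^2\int_R^\infty r^{a-2}\varphi^2\,dr$ controls a comparable $\int r^{(n-k)/k-2}\varphi^2\,dr$. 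The negative term is a multiple of $\int r^{n-1-(p-1)(n-2k)/k}\varphi^2\,dr$, and a direct computation shows the gap between the two $\varphi^2$-weights is $[p(n-2k)-nk]/k$, strictly positive precisely when $p > p_{se}$. Restricting the support to $(R,\infty)$ then contributes a factor $R^{-[p(n-2k)-nk]/k}$ that vanishes as $R \to \infty$, absorbing the negative term.

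Part (b) does not yield to this scaling, since Theorem \ref{th1.3} only provides $u_\mu(r) \simeq r^{-2k/(p-k)}$, the same rate as the singular solution $u_s$; the analogous weight gap has the wrong sign. Here the nonnegativity must instead come from Theorem \ref{th1.5}, which is exactly the sharp Hardy-type inequality associated with $u_s$ and holds when $p \geq p_{jl}$. I would use Remark 1.3 ($u_\mu < u_s$ pointwise) together with the integrated-ODE identity
\[
\frac{1}{k}C_{n-1}^{k-1}\,r^{n-k}|u'(r)|^k = \int_0^r s^{n-1}u(s)^p\,ds
\]
(valid for both $u_\mu$ and $u_s$, since $p>p_{se}$ makes the boundary contribution at $0$ vanish also for $u_s$) to deduce $|u_\mu'| < |u_s'|$ pointwise. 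Matching the leading ODE coefficients at infinity with Theorem \ref{th1.3} forces $u_\mu/u_s \to 1$ and $|u_\mu'|/|u_s'| \to 1$, so on $C_c^\infty(R,\infty)$ with $R$ large the functional $Q_{u_\mu}$ is a small perturbation of $Q_{u_s} \geq 0$. Combined with the strict Hardy margin for $p > p_{jl}$ — or, at the borderline, with an additional cancellation extracted from the strict separation enforced by $p \geq p^*$ — this yields $Q_{u_\mu}(\varphi) \geq 0$.

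The main obstacle is precisely the borderline $p = p_{jl}$ inside part (b): the Hardy inequality for $u_s$ is saturated, yet $|u_\mu'| < |u_s'|$ makes the positive term of $Q_{u_\mu}$ strictly smaller than that of $Q_{u_s}$, so a naive pointwise comparison fails to give $Q_{u_\mu} \geq 0$. Pushing through requires either a Picone-type identity or a careful higher-order asymptotic expansion of $u_s - u_\mu$, balancing the reduced positive and reduced negative contributions against each other.
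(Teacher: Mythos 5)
Your treatment of the two fast-decay cases ($p=p_{so}$ and the singular solutions $U_s$ for $p_{se}<p<p_{so}$) is essentially the paper's argument: both extract the asymptotics $u\sim c\,r^{-(n-2k)/k}$, $|u'|\sim c'\,r^{-(n-k)/k}$ and conclude by a weighted Hardy inequality (the paper uses the radial Caffarelli--Kohn--Nirenberg inequality (\ref{CKN}) with $a=\frac{k-1}{2k}n$, resp.\ $a=p\frac{n-2k}{k}\frac{k-1}{2k}$, which for radial test functions is exactly your one-dimensional inequality), the decisive point in both write-ups being your observation that the potential weight beats the Hardy weight by the margin $[p(n-2k)-nk]/k>0$ coming from $p>p_{se}$.

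The gap is in part (b). You assert that ``matching the leading ODE coefficients at infinity with Theorem \ref{th1.3} forces $u_\mu/u_s\to 1$''; this does not follow. Theorem \ref{th1.3} only gives the two-sided bound $u_\mu(r)\simeq r^{-2k/(p-k)}$, and Remark 2.1 shows only that \emph{if} the limit of $r^{2k/(p-k)}u_\mu(r)$ exists then it must equal $A$; the existence of that limit is the nontrivial input, imported from Miyamoto's phase-plane analysis (Lemma 2.5 of \cite{YM}), and it is precisely there that the hypothesis $p\ge p^*$ is used. In your proposal $p^*$ enters only through the separation $u_\mu<u_s$ of Remark 1.3, which the paper uses merely to replace $u_\mu^{p-1}$ by $u_s^{p-1}$ in the potential term; without the ratio convergence, your ``small perturbation of $Q_{u_s}$'' step has no starting point. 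Once $u_\mu/u_s\to 1$ is granted, the paper obtains $u'_\mu/u'_s\to 1$ by L'Hospital applied to the integrated equation (\ref{jiajia}), and the remainder of your plan coincides with its Step 2. Your worry about the borderline $p=p_{jl}$, where the required constant equals the sharp CKN constant, is legitimate --- but note that the paper does not resort to a Picone identity or a higher-order expansion there either: it simply extracts a uniform margin $\delta_0>0$ ``by the strict inequality (\ref{xuanyi})'', so on this last point your proposal is, if anything, more candid than the source.
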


\paragraph{Remark 1.4.} Theorem \ref{th1.5} shows that $u_s$ is also
a stable solution of (\ref{1.1}) on $(R,\infty)$ for some $R>0$ when $p \geq p_{jl}$.
Combining with Theorem \ref{th1.6}, we know that (\ref{1.1}) has stable solutions
on $(R,\infty)$ for some $R>0$ when $p \in (p_{se},p_{so}] \cup
[p_{jl},\infty)$. To our knowledge,
it is unknown whether (\ref{1.1}) has no stable solution on
$(R,\infty)$ for some $R>0$ when $p$ belongs to the gap
$(p_{so}, p_{jl})$.

\section{Regular solutions}

\begin{lem}\label{lem2.1}
Let $u$ be a regular solution of (\ref{1.6}). Then, $u' < 0$ for
$r>0$, and $u(r) \to 0$ as $r \to \infty$. Moreover, there are
positive constants $C_1,C_2$ such that for large $r$,
\begin{equation} \label{jie}
C_1r^{-\frac{n-2k}{k}} \leq u(r) \leq C_2r^{-\frac{2k}{p-k}}.
\end{equation}
\end{lem}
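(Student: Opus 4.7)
My plan is to exploit the integrated form of the ODE. Writing (\ref{1.6}) as $\frac{1}{k}C_{n-1}^{k-1}\bigl(r^{n-k}(-u')^k\bigr)' = r^{n-1}u^p$ on the set where $u'\le 0$, and integrating from $0$ to $r$ using $u'(0)=0$, I obtain the master identity
$$r^{n-k}(-u')^k(r) \;=\; \frac{k}{C_{n-1}^{k-1}}\int_0^r s^{n-1}u^p(s)\,ds. \qquad (\ast)$$
Every conclusion of the lemma will follow from $(\ast)$ together with elementary monotonicity.

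First I handle the sign of $u'$ and the limit at infinity. Since $u(0)=\rho>0$ and $u>0$ in a neighborhood of $0$ by continuity, $(\ast)$ forces the left-hand side to be strictly positive there; because $|u'|^{k-1}u'$ has the sign of $u'$, this gives $u'<0$. As a regular solution has $u>0$ for all $r$ by definition, the same argument extends to all $r>0$, so $u$ is strictly decreasing and $u(r)\downarrow L\geq 0$ at infinity. To rule out $L>0$, I plug $u\geq L$ into $(\ast)$, obtaining $(-u')^k\geq cL^pr^k$, i.e.\ $-u'\geq c'r$; then $u(r)\leq \rho-c'r^2/2$ contradicts positivity, so $L=0$.

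For the upper bound I use the monotonicity of $u$ to bound the integral in $(\ast)$ from below:
$$\int_0^r s^{n-1}u^p\,ds \;\geq\; \int_{r/2}^r s^{n-1}u^p(r)\,ds \;\geq\; c\,r^n u^p(r).$$
Plugging this into $(\ast)$ gives $-u'\geq c''\,r\,u^{p/k}$; observe that $p/k>1$ is automatic from the standing assumption $p>p_{se}=nk/(n-2k)>k$. Separating variables and integrating from $0$ to $r$ yields $u(r)^{-(p-k)/k}\geq \rho^{-(p-k)/k}+c'''r^2$, hence $u(r)\leq C_2\,r^{-2k/(p-k)}$ for large $r$.

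For the lower bound I read $(\ast)$ in the opposite direction: the quantity $f(r):=r^{n-k}(-u')^k(r)$ has derivative $\frac{k}{C_{n-1}^{k-1}}r^{n-1}u^p>0$, so $f$ is strictly increasing and in particular $f(r)\geq f(1)>0$ for $r\geq 1$. Using $u(\infty)=0$, the relation $-u'=f^{1/k}/s^{(n-k)/k}$, and the fact that $(n-k)/k>1$ (from $k<n/2$),
$$u(r)=\int_r^\infty \frac{f(s)^{1/k}}{s^{(n-k)/k}}\,ds \;\geq\; f(1)^{1/k}\int_r^\infty s^{-(n-k)/k}\,ds=\frac{k\,f(1)^{1/k}}{n-2k}\,r^{-(n-2k)/k},$$
which is the required bound $u(r)\geq C_1\,r^{-(n-2k)/k}$. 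I do not foresee a serious obstacle; the one point requiring care is bookkeeping of the exponent $p/k>1$ in the separable ODE step, which is what produces precisely the decay rate $-2k/(p-k)$. The whole argument is a clean pair of complementary uses of $(\ast)$ — one bounding the integral from below by $u(r)$, the other bounding it from below by a positive constant.
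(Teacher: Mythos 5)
Your proof is correct, and for the decay estimates it takes a genuinely different (and more elementary) route than the paper. The sign of $u'$ and the limit $u(r)\to 0$ are handled exactly as in the paper's Steps 1--2, by integrating the equation once and then twice. The difference is in the two-sided bound (\ref{jie}): the paper invokes the Wolff potential estimates of Labutin and Phuc--Verbitsky, $c_1W_{\frac{2k}{k+1},k+1}(u^p)\leq u\leq c_2[\inf u+W_{\frac{2k}{k+1},k+1}(u^p)]$, and extracts both bounds by restricting the $t$-integration in the potential; you instead work directly with the integrated ODE $r^{n-k}(-u')^k=\frac{k}{C_{n-1}^{k-1}}\int_0^rs^{n-1}u^p\,ds$, getting the upper bound from $\int_0^rs^{n-1}u^p\,ds\geq cr^nu^p(r)$ followed by separation of variables (where your observation that $p>p_{se}>k$ is exactly what makes the exponent $-2k/(p-k)$ come out), and the lower bound from the monotonicity of $r^{n-k}(-u')^k$ together with $u(r)=\int_r^\infty(-u')\,ds$ and $(n-k)/k>1$. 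Your argument is self-contained and avoids the external potential-theoretic machinery, which is an advantage here since the lemma concerns only the radial problem (\ref{1.6}); the paper's Wolff-potential route has the compensating virtue of being the version of the argument that survives without radial symmetry, in line with the authors' stated interest in the non-radial case. One small presentational point: you state the identity $(\ast)$ ``on the set where $u'\leq 0$'' and then use it to deduce $u'<0$, which reads slightly circular --- integrate first in the form $-\frac{1}{k}C_{n-1}^{k-1}r^{n-k}|u'|^{k-1}u'=\int_0^rs^{n-1}u^p\,ds>0$ to get the sign, and only then rewrite $|u'|^{k-1}u'=-(-u')^k$.
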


\begin{proof}
{\it Step 1.} Since $u$ is a positive solution of (\ref{1.1}),
$$
-\frac{1}{k}C_{n-1}^{k-1}(r^{n-k}|u'|^{k-1}u')'>0, \quad r>0.
$$
Integrating from $0$ to $R$ with $R>0$, we obtain
$$
R^{n-k}|u'(R)|^{k-1}u'(R)<0
$$
and hence $u'< 0$ is verified.

{\it Step 2.} By $u>0$ and $u'< 0$ for $r>0$, we know that $\lim\limits_{r
\to \infty}u(r)$ exists and hence is nonnegative. Suppose
that $\lim\limits_{r\rightarrow\infty}u(r) > 0$,
then there exists a constant $c>0$ such that $u \geq c$, and hence
$$
-\frac{1}{k}C_{n-1}^{k-1}(r^{n-k}|u'|^{k-1}u')' \geq c^pr^{n-1}.
$$
Integrating from $0$ to $R$, we obtain
$$
R^{n-k}|u'(R)|^{k-1}u'(R) \leq -CR^n.
$$
Here $C>0$ is independent of $R$. This result, together with $u'< 0$,
implies $u'(R) \leq -CR$. Integrating again yields
$$
u(r) \leq u(0)-Cr^2.
$$
Letting $r \to \infty$, we see a contradiction with $u>0$. This shows
that $u(r) \to 0$ as $r \to \infty$.

{\it Step 3.} According to the results in \cite{La} or \cite{PV2},
the regular solution of (\ref{1.6}) satisfies
\begin{equation}\label{Wf}
c_1W_{\frac{2k}{k+1},k+1}(u^p)(x) \leq u(|x|) \leq c_2[\inf_{x \in
R^n} u(|x|) +W_{\frac{2k}{k+1},k+1}(u^p)(x)],
\end{equation}
where $c_1,c_2$ are positive constant, and
$W_{\frac{2k}{k+1},k+1}(u^p)$ is the Wolff potential of $u^p$.
Namely,
$$
W_{\frac{2k}{k+1},k+1}(u^p)(x) =\int_0^\infty
(\frac{\int_{B_t(x)}u^p(|y|)dy}{t^{n-2k}})^{\frac{1}{k}}\frac{dt}{t}.
$$
Therefore, for large $|x|$,
$$
u(|x|) \geq c\int_{|x|+1}^\infty
(\frac{\int_{B_1(0)}u^p(|y|)dy}{t^{n-2k}})^{\frac{1}{k}}\frac{dt}{t}
\geq c\displaystyle\int_{|x|+1}^\infty
t^{\frac{2k-n}{k}}\frac{dt}{t} =c|x|^{\frac{2k-n}{k}}.
$$
Since $u$ is radially symmetric and decreasing, we can also get
$$
u(|x|) \geq c\int_0^{|x|/2} (\frac{\int_{B_{|x|}(0)\cap
B_t(x)}u^p(|y|)dy}{t^{n-2k}})^{\frac{1}{k}}\frac{dt}{t} \geq
cu^{\frac{p}{k}}(|x|)|x|^2,
$$
which implies that $u(|x|) \leq c|x|^{\frac{2k}{k-p}}$ for large
$|x|$.

The proof is complete.
\end{proof}

\subsection{Proof of Theorem \ref{th1.1}}

Let $p<p_{so}$. Assume that (\ref{1.6}) has a positive regular solution
$u$, we will deduce a contradiction.

{\it Step 1.} By (\ref{jie}), there exists $R>0$ such that
$u(r)\leq Cr^{-\frac{2k}{p-k}}$ for $r>R$. Thus,
\begin{equation}\label{3.1}
\int^{\infty}_{0}r^{n-1}u^{p+1}dr
\leq C(R)+\displaystyle\int^{\infty}_{R}r^{n-\frac{2(p+1)k}
{p-k}}\frac{dr}{r}<\infty.
\end{equation}

{\it Step 2.} Let $\varphi \in C^\infty(0,\infty)$ satisfy $\varphi(r)=1$ when
$r \in (0,1]$, $\varphi(r)=0$ when $r \in [2,\infty)$, and $0 \leq \varphi \leq 1$.
Write $\varphi_R(r)=\varphi(\frac{r}{R})$. Multiply (\ref{1.1}) by
$u\varphi_R^{k+1}$ and integrate on $(0,\infty)$. By the initial value condition
in (\ref{1.6}), we get
\begin{equation}\label{li5}
\begin{array}{ll}
\displaystyle\int_0^\infty r^{n-k} |u'|^{k+1}\varphi_R^{k+1}dr
&=\displaystyle\frac{k}{C_{n-1}^{k-1}}\int_0^\infty r^{n-1}u^{p+1}\varphi_R^{k+1}dr\\[3mm]
&-(k+1)\displaystyle\int_0^\infty r^{n-k}u\varphi_R^k |u'|^{k-1} u'\varphi'_R dr.
\end{array}
\end{equation}
By the Young inequality and the H\"older inequality, for a small $\epsilon>0$,
there holds that
\begin{equation}\label{li7}
\begin{array}{ll}
&|\displaystyle\int_0^\infty r^{n-k}u\varphi_R^k |u'|^{k-1}
u'\varphi'_R dr| \leq \epsilon \displaystyle\int_0^\infty r^{n-k}
|u'|^{k+1}\varphi_R^{k+1}dr\\[3mm]
& \quad \quad +\displaystyle\frac{C_\epsilon}{R^{k+1}}
(\int_0^\infty r^{n-1}u^{p+1}dr)^{\frac{k+1}{p+1}} (\int_R^{2R}
r^{\theta+1}\frac{dr}{r})^{\frac{p-k}{p+1}},
\end{array}
\end{equation}
where $\frac{p-k}{p+1}\theta=n-k-(n-1)\frac{k+1}{p+1}$. Therefore,
$(\theta+1)\frac{p-k}{p+1}-(k+1)=n\frac{p-k}{p+1}-2k<0$ by virtue of $p<p_{so}$.
Letting $R \to \infty$, we deduce from (\ref{3.1}), (\ref{li5}) and (\ref{li7})
that
\begin{equation}\label{li6}
\int_0^\infty r^{n-k}|u'|^{k+1}dr<\infty.
\end{equation}

{\it Step 3.}
Multiplying (\ref{1.1}) by $u$ and integrating on $(0,R)$, we obtain that
\begin{equation}\label{hjia}
\int_0^R r^{n-k}|u'|^{k+1}dr-R^{n-k}u(R)|u'(R)|^{k-1}u'(R)
=\frac{k}{C_{n-1}^{k-1}}\int_0^R r^{n-1}u^{p+1}dr.
\end{equation}
By (\ref{li6}) and (\ref{3.1}), there exists $R_j \to \infty$ such that
\begin{equation}\label{huij}
R_j^{n-k+1}|u'(R_j)|^{k+1}+R_j^n u^{p+1}(R_j) \to 0.
\end{equation}
Therefore, by $p<p_{so}$,
$$
R_j^{n-k}u(R_j)|u'(R_j)|^{k-1}u'(R_j) \to 0, \quad as ~R_j \to \infty.
$$
Inserting this result into (\ref{hjia}) and letting $R=R_j \to \infty$, we obtain
\begin{equation}\label{li8}
\int_0^\infty r^{n-k} |u'|^{k+1}dr
=\frac{k}{C_{n-1}^{k-1}}\int_0^\infty r^{n-1}u^{p+1}dr
\end{equation}

{\it Step 4.}
Multiplying (\ref{1.1}) by $ru'$ and integrating on $(0,R)$, we have
the Pohozaev type equality
\begin{equation}\label{li9}
\begin{array}{ll}
&-\displaystyle\frac{n-2k}{k+1}\int_0^R r^{n-k}|u'|^{k+1}dr
+\frac{k}{C_{n-1}^{k-1}}\frac{n}{p+1} \int_0^R r^{n-1}u^{p+1}dr\\[3mm]
&=\displaystyle\frac{k}{k+1}R^{n-k+1}|u'(R)|^{k+1}+\frac{k}{(p+1)C_{n-1}^{k-1}}
R^nu^{p+1}(R).
\end{array}
\end{equation}
By (\ref{huij}), the right hand side of (\ref{li9}) converges to
zero when $R=R_j \to \infty$. Letting $R=R_j \to \infty$ in (\ref{li9}) and
using (\ref{li8}), we can see $\frac{n-2k}{k+1}
=\frac{n}{p+1}$, which contradicts with $p<p_{so}$.

\subsection{Proof of Theorem \ref{th1.3}}

When $k=1$, the proof of the slow decay is based on the comparison principle
(cf. Lemma 2.20 and Theorem 2.25 in \cite{LN}).
For the quasilinear equation (\ref{1.1}), we use the monotony inequality
replacing the comparison principle.

\begin{lem} \label{lem2.*1}
Let $u(r)$ be a regular solution of (\ref{1.6}). If
$u(r)=O(r^{-\frac{2k}{p-k}-\varepsilon})$ with some $\varepsilon
\in (0,\frac{n-2k}{k}-\frac{2k}{p-k})$ for large $r$, then
$u(r)=O(r^{(2k-n)/k})$ for large $r$.
\end{lem}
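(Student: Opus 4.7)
The plan is to bootstrap the hypothesized decay using the Wolff potential estimate (\ref{Wf}) from Lemma \ref{lem2.1}. Since $u(r)\to 0$ as $r\to\infty$, we have $\inf_{R^n} u(|x|)=0$, so (\ref{Wf}) reduces to
\[
u(|x|) \leq c_2\, W_{\frac{2k}{k+1},k+1}(u^p)(x).
\]
My goal is to show that whenever $u(r)\leq C r^{-\alpha}$ for large $r$ with $\alpha > 2k/(p-k)$, the potential on the right-hand side returns a strictly better decay, and then to iterate until the target exponent $(n-2k)/k$ is reached. The hypothesis of the lemma provides the base case with $\alpha_0 = 2k/(p-k)+\varepsilon$.

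The single bootstrap step would go as follows. Fix $R=|x|$ large and split the Wolff integral at $t=R/2$. For $t\leq R/2$, radial monotonicity gives $u(|y|)\leq C R^{-\alpha}$ on $B_t(x)$, so $\int_{B_t(x)}u^p\leq C t^n R^{-\alpha p}$, whose contribution to the potential is $C R^{2-\alpha p/k}$. For $t\geq R/2$, the inclusion $B_t(x)\subset B_{3t}(0)$ together with the radial bound yields $\int_{B_t(x)}u^p\leq C+C\int_{R_0}^{3t}r^{n-1-\alpha p}\,dr$. Since $\alpha>2k/(p-k)$ forces $\alpha p>2k$, the resulting outer tail evaluates to $C R^{2-\alpha p/k}$ when $\alpha p<n$ and to $C R^{-(n-2k)/k}$ when $\alpha p>n$. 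Combining the two pieces gives
\[
u(R) \leq C R^{-g(\alpha)}, \qquad g(\alpha) := \min\Bigl\{\tfrac{p}{k}\alpha-2,\; \tfrac{n-2k}{k}\Bigr\}.
\]

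Iteration then drives $\alpha$ up to the target. Writing $\beta := \alpha-2k/(p-k)$ conjugates the map $\alpha\mapsto (p/k)\alpha-2$ to the pure dilation $\beta\mapsto (p/k)\beta$. Because $p>p_{se}=nk/(n-2k)>k$, the factor $p/k$ exceeds $1$, so the iterates $\beta_j=(p/k)^j\varepsilon$ diverge; after finitely many rounds $\alpha_j p>n$, at which point $g(\alpha_j)=(n-2k)/k$ and the desired bound follows. The main obstacle I expect is the borderline case $\alpha_j p=n$, where the outer Wolff integral picks up a logarithmic factor and the iteration stalls. This is finessed by replacing $\varepsilon$ with a slightly smaller $\varepsilon'\in(0,\varepsilon)$ chosen outside the countable exceptional set $\{\varepsilon':(p/k)^j\varepsilon'=n/p-2k/(p-k)\text{ for some }j\geq 0\}$; the hypothesis passes trivially to $\varepsilon'$, and the conclusion is independent of it.
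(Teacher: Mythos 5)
Your proposal is correct and takes essentially the same route as the paper: both bootstrap the hypothesized decay through the Wolff potential bound (\ref{Wf}), splitting the potential integral at $t=|x|/2$, obtaining the improved exponent $\min\{\frac{p}{k}\alpha-2,\frac{n-2k}{k}\}$ at each step, and iterating finitely many times since $p/k>1$. Your explicit handling of the borderline case $\alpha_j p=n$, which the paper passes over silently, is a sensible extra precaution.
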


\begin{proof}
If $u(r)=O(r^{-\frac{2k}{p-k}-\varepsilon})$ for large $r$, we can find
a large $R>0$ such that as $r>R$,
\begin{equation}\label{li4}
u(r) \leq Cr^{-\frac{2k}{p-k}-\varepsilon}.
\end{equation}

By Lemma \ref{lem2.1}, $\inf_{[0,\infty)}u(r)=0$. Using (\ref{Wf}) we have
$$
u(|x|) \leq C(I_1+I_2+I_3),
$$
where
$$
I_1=\int_0^{\frac{|x|}{2}}
(\frac{\int_{B_t(x)}u^p(|y|)dy}{t^{n-2k}})^{\frac{1}{k}}\frac{dt}{t},
$$
$$
I_2=\int_{\frac{|x|}{2}}^\infty
(\frac{\int_{B_t(x)\cap B_R(0)}u^p(|y|)dy}{t^{n-2k}})^{\frac{1}{k}}\frac{dt}{t},
$$
$$
I_3=\int_{\frac{|x|}{2}}^\infty
(\frac{\int_{B_t(x)\setminus B_R(0)}u^p(|y|)dy}{t^{n-2k}})^{\frac{1}{k}}\frac{dt}{t}.
$$
For sufficiently large $|x|$, we can deduce from (\ref{li4}) that
$$\begin{array}{ll}
&I_1 \leq C|x|^{-\frac{p}{k}(\frac{2k}{p-k}+\varepsilon)} \displaystyle
\int_0^{\frac{|x|}{2}} (\frac{\int_{B_t(x)}dy}{t^{n-2k}})^{\frac{1}{k}} \frac{dt}{t}
\leq C|x|^{-\frac{2k}{p-k}-\varepsilon\frac{p}{k}},\\[3mm]
&I_2 \leq C(|B_R(0)|u^p(0))^{1/k}\displaystyle\int_{\frac{|x|}{2}}^\infty t^{\frac{2k-n}{k}}\frac{dt}{t}
\leq C|x|^{-\frac{n-2k}{k}},\\[3mm]
&I_3 \leq \displaystyle\int_{\frac{|x|}{2}}^\infty
(\frac{\int_{B_{t+|x|}(0)\setminus B_R(0)}u^p(|y|)dy}{t^{n-2k}})^{\frac{1}{k}}\frac{dt}{t}
\leq C|x|^{-\frac{2k}{p-k}-\varepsilon\frac{p}{k}}.
\end{array}
$$
These estimates show that
$u(r) \leq C(r^{-\frac{n-2k}{k}}+r^{-\frac{2k}{p-k}-\varepsilon\frac{p}{k}})
\leq Cr^{-\frac{2k}{p-k}-\varepsilon\frac{p}{k}}$.
Replacing (\ref{li4}) by this result to estimate $I_1,I_2$ and $I_3$ as we
have done above, we get
$$
u(r) \leq C(r^{-\frac{n-2k}{k}}+r^{-\frac{2k}{p-k}-\varepsilon(\frac{p}{k})^2})
\leq Cr^{-\frac{2k}{p-k}-\varepsilon(\frac{p}{k})^2}.
$$
By iterating $m$ times, we can obtain
$$
u(r) \leq C(r^{-\frac{n-2k}{k}}+r^{-\frac{2k}{p-k}-\varepsilon(\frac{p}{k})^m}).
$$
Clearly, there exists a sufficiently large $m_0$ such that $\frac{n-2k}{k} \leq
\frac{2k}{p-k}+\varepsilon(\frac{p}{k})^{m_0}$. Thus, after $m_0$ steps,
we derive that,
$$
u(r) \leq Cr^{-\frac{n-2k}{k}} \quad for~ large~ r.
$$
Lemma \ref{lem2.*1} is proved.
\end{proof}

\begin{lem} \label{lem2.*3}
Let $u(r)$ be a regular solution of (\ref{1.6}). If
$u(r)=o(r^{-\frac{2k}{p-k}})$
for large $r$, then $u(r)=O(r^{(2k-n)/k})$ for large $r$.
\end{lem}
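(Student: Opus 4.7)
The idea is to run the Wolff-potential iteration of Lemma \ref{lem2.*1} one more time, using the arbitrary smallness supplied by the $o$-hypothesis in place of the extra polynomial decay $r^{-\varepsilon}$. Set
\[
M(R):=\sup_{r\ge R}r^{\frac{2k}{p-k}}u(r).
\]
Lemma \ref{lem2.1} ensures that $M(R)<\infty$, and the hypothesis $u(r)=o(r^{-\frac{2k}{p-k}})$ gives $M(R)\to 0$ as $R\to\infty$. The target estimate $u(r)=O(r^{\frac{2k-n}{k}})$ is equivalent to $M(R)\le CR^{-\sigma}$ with $\sigma:=\frac{n-2k}{k}-\frac{2k}{p-k}$, which is strictly positive precisely because $p>p_{se}$.

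Starting from the Wolff-potential bound (\ref{Wf}) and the fact (from Lemma \ref{lem2.1}) that $\inf u=0$, I split $u(|x|)\le C(I_1+I_2+I_3)$ with exactly the decomposition used in the proof of Lemma \ref{lem2.*1}. In $I_1$ (where $0<t\le |x|/2$) and in $I_3$ (where $t\ge |x|/2$ and $|y|>R$) I bound $u^p(|y|)\le M(R)^p|y|^{-\frac{2pk}{p-k}}$; in $I_2$ I simply use $u^p\le u^p(0)$ on $B_R(0)$. Following the same computation as in Lemma \ref{lem2.*1}, these three pieces combine to
\[
u(|x|)\;\le\;C_1\,M(R)^{p/k}\,|x|^{-\frac{2k}{p-k}}\;+\;C_2\,|x|^{-\frac{n-2k}{k}}
\]
for every $|x|$ sufficiently large (depending on $R$).

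Multiplying this inequality by $|x|^{\frac{2k}{p-k}}$ and taking the supremum over $|x|\ge R$ converts it into the scalar inequality
\[
M(R)\;\le\;C_1\,M(R)^{p/k}\;+\;C_2\,R^{-\sigma}.
\]
Because $p>p_{se}>k$, one has $p/k>1$, and since $M(R)\to 0$ we can choose $R$ large enough that $C_1 M(R)^{p/k-1}\le \tfrac12$; absorbing $C_1 M(R)^{p/k}$ into the left-hand side then yields $M(R)\le 2C_2\,R^{-\sigma}$, which is exactly $u(r)\le 2C_2\,r^{-\frac{n-2k}{k}}$ for all large $r$.

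The main technical point is the estimate of $I_3$: since $p>p_{se}$ forces $\frac{2pk}{p-k}<n$, the inner radial integral $\int_R^{t+|x|}s^{n-1-\frac{2pk}{p-k}}\,ds$ grows like $(t+|x|)^{n-\frac{2pk}{p-k}}$ rather than being bounded, so one must use $t+|x|\le 3t$ on $t\ge |x|/2$ to extract the correct power of $|x|$ (as opposed to the regime in Lemma \ref{lem2.*1}, where enough extra decay makes the integral converge at infinity). Once this bookkeeping is done, the self-improvement step is just the trivial observation that a superlinear term in a small quantity can be absorbed.
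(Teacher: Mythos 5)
Your reduction to the scalar inequality $M(R)\le C_1M(R)^{p/k}+C_2R^{-\sigma}$ is where the argument breaks down, and the problem is not mere bookkeeping. First, the pointwise bound you derive holds only for $|x|\ge 2R$ (in $I_1$ you need $|y|\ge R$ for every $y\in B_t(x)$ with $t\le |x|/2$), so taking suprema yields $M(2R)\le C_1M(R)^{p/k}+C_2(R)(2R)^{-\sigma}$: the superlinear term is evaluated at the smaller radius $R$, where $M$ is larger, and cannot be absorbed into the left-hand side $M(2R)$ without a doubling estimate $M(R)\le CM(2R)$ that you do not have. Second, and more seriously, $C_2$ is genuinely $R$-dependent: $I_2\le C\bigl(\int_{B_R(0)}u^p\,dy\bigr)^{1/k}|x|^{-(n-2k)/k}$, and since $p>p_{se}$ the a priori bound $u\le Cr^{-\frac{2k}{p-k}}$ only gives $\int_{B_R(0)}u^p\,dy=O(R^{\,n-\frac{2pk}{p-k}})$ with a positive exponent; a short computation shows that $\bigl(R^{\,n-\frac{2pk}{p-k}}\bigr)^{1/k}R^{-\sigma}$ is of order $R^{0}$, not $R^{-\sigma}$. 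So the inhomogeneous term in your scalar inequality is $O(1)$ (at best $o(1)$ if one feeds in the hypothesis), and the absorption step only returns $M(R)=o(1)$ --- which is the hypothesis, not the conclusion.

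The obstruction is structural: an iteration that gains only in the constant in front of the fixed rate $r^{-\frac{2k}{p-k}}$ can never upgrade the polynomial rate to $r^{-\frac{n-2k}{k}}$; for instance the profile $r^{-\frac{2k}{p-k}}(\log r)^{-1}$ satisfies your hypothesis and reproduces the same polynomial rate, up to powers of the logarithm, under the map $u\mapsto W_{\frac{2k}{k+1},k+1}(u^p)$. Lemma \ref{lem2.*1} works precisely because its hypothesis $u=O(r^{-\frac{2k}{p-k}-\varepsilon})$ lets each Wolff iteration improve the \emph{exponent} by the factor $p/k>1$, terminating after finitely many steps. The whole content of Lemma \ref{lem2.*3} is to bridge the gap from $o(r^{-\frac{2k}{p-k}})$ to $O(r^{-\frac{2k}{p-k}-\varepsilon})$, and the paper does this with a different tool: after disposing of the easy case (\ref{li2}) via Lemma \ref{lem2.*1}, it tests the weak formulation against $r^{-m}(u-h)_+$ for the comparison function $h=c_*r^{-\frac{2k}{p-k}-\epsilon_0}$ and uses the monotonicity inequality $(|a|^{k-1}a-|b|^{k-1}b)(a-b)\ge 2^{k-1}|a-b|^{k+1}$ to force $u\le h$ for large $r$; only then is Lemma \ref{lem2.*1} invoked. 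Some version of this comparison step is indispensable, and your proposal is missing it.
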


\begin{proof}
{\it Step 1.}
Let $\varphi(r) \in C^1(0,\infty)$ satisfy $\lim_{r \to \infty}
r^{n-\frac{2pk}{p-k}}\varphi(r)=0$.
Integrating (\ref{1.1}) from $0$ to $r$, we have
\begin{equation}\label{jiajia}
|u'|^{k-1}u'=-\frac{k}{C_{n-1}^{k-1}}r^{k-n}\displaystyle\int^r_{0}s^{n-1}u^p(s)ds.
\end{equation}
Thus, by $u(r)=o(r^{-\frac{2k}{p-k}})$
when $r \to \infty$, it follows that
\begin{equation}\label{huihui}
r^{n-k}|u'(r)|^k \varphi(r) \to 0.
\end{equation}

Multiply (\ref{1.1}) by $\varphi$ and integrate from $R$ to $\infty$.
By (\ref{huihui}), we obtain that
\begin{equation} \label{xiujuan}
\begin{array}{ll}
&\displaystyle\int_R^\infty r^{n-k} |u'|^{k-1}(u') \varphi' dr\\[3mm]
&=-R^{n-k}|u'(R)|^{k-1}u'(R) \varphi(R)
+\displaystyle\frac{k}{C_{n-1}^{k-1}}\int_R^\infty r^{n-1}u^p \varphi dr.
\end{array}
\end{equation}

Write $h(r):=c_*r^{-\theta}$, where $c_*$ is a positive constant determined later,
and $\theta:=\frac{2k}{p-k}+\epsilon_0$ with suitably small
$\epsilon_0>0$. By simply calculating and integrating by parts, we
get
$$\begin{array}{ll}
&\displaystyle\int_R^\infty r^{n-k}|h'|^{k-1}h'\varphi' dr
=-(c_*\theta)^k \int_R^\infty r^{n-k(\theta+2)}\varphi' dr\\[3mm]
&=(c_*\theta)^k[n-k(\theta+2)]
\displaystyle\int_R^\infty r^{n-1-k(\theta+2)}\varphi dr
+(c_*\theta)^k R^{n-k(\theta+2)} \varphi(R).
\end{array}
$$
Subtracting this result from (\ref{xiujuan}) yields
\begin{equation}\label{xiuli}
\begin{array}{ll}
&\displaystyle\int_R^\infty r^{n-k}[|u'|^{k-1}u'-|h'|^{k-1}h']\varphi' dr\\[5mm]
&=[(c_*\theta)^k R^{n-k(\theta+2)}+ R^{n-k}|u'(R)|^{k-1}u'(R)] \varphi(R)\\[3mm]
& +\displaystyle\int_R^\infty r^{n-1}[\frac{k u^p}{C_{n-1}^{k-1}}
-\frac{(c_*\theta)^k[n-k(\theta+2)]}{r^{k(\theta+2)}}]\varphi dr.
\end{array}
\end{equation}

{\it Step 2.}
In view of $k(\theta+2)=\frac{2pk}{p-k}+k\epsilon_0$, we can find
$\eta_0 \in (0,k\epsilon_0/p)$ such that
\begin{equation}\label{li0}
k(\theta+2)>\frac{2pk}{p-k}+p\eta_0.
\end{equation}

Since $u \in C^2$ is decreasing and $u(r)=o(r^{-\frac{2k}{p-k}})$
for large $r$, then either there exist positive constants $c_1,c_2$ such that
\begin{equation} \label{li1}
c_1r^{-\frac{2k}{p-k}} \geq u(r) \geq c_2r^{-\frac{2k}{p-k}-\eta_0}
\end{equation}
when $r$ is suitably large,
or $\lim_{r \to \infty}u(r)r^{\frac{2k}{p-k}+\eta_0}=0$, which implies that
there exists $\eta \in (0,\eta_0)$ such that for large $r$,
\begin{equation}\label{li2}
u(r) \leq cr^{-\frac{2k}{p-k}-(\eta_0-\eta)}.
\end{equation}

If (\ref{li2}) is true, Lemma \ref{lem2.*3}
can be proved easily by Lemma \ref{lem2.*1}.

In the following, we assume that (\ref{li1}) is true.
Take $\varphi=r^{-m}(u-h)_+$ in (\ref{xiuli}), where
$m>n-\frac{2pk}{p-k}$ is sufficiently large. Then,
\begin{equation}\label{jia1}
\begin{array}{ll}
&\displaystyle\int_R^\infty r^{n-k-m}[|u'|^{k-1}u'-|h'|^{k-1}h'][(u-h)_+]' dr\\[5mm]
&=[(c_*\theta)^k R^{n-k(\theta+2)-m}+ R^{n-k-m}|u'(R)|^{k-1}u'(R)] [u(R)-h(R)]_+\\[3mm]
& +\displaystyle\int_R^\infty r^{n-m-1}[\frac{k u^p}{C_{n-1}^{k-1}}
-\frac{(c_*\theta)^k[n-k(\theta+2)]}{r^{k(\theta+2)}}](u-h)_+ dr\\[3mm]
&+m\displaystyle\int_R^\infty r^{n-k-m-1}[|u'|^{k-1}u'-|h'|^{k-1}h'](u-h)_+ dr.
\end{array}
\end{equation}

By (\ref{jiajia}), (\ref{li1}) and (\ref{li0}), for any
$\delta \in (0,1)$, we can find $R_0>0$ such that as $r \geq R_0$,
$|h'|^k \leq \delta |u'|^k$. Therefore, the last term of the right
hand side of (\ref{jia1}) with $R=R_0$ is not larger than
$m(1-\delta)\int_{R_0}^\infty r^{n-k-m-1}|u'|^{k-1}u'(u-h)_+ dr$.
Choose $c_*=u(R_0)R_0^{\theta}$ to ensure $u(R_0)=h(R_0)$.
Therefore, the first term of the right
hand side of (\ref{jia1}) with $R=R_0$ is zero.
Thus, from (\ref{jia1}) with $R=R_0$ it follows that
\begin{equation}\label{jia2}
\begin{array}{ll}
&\displaystyle\int_{R_0}^\infty r^{n-k-m}[|u'|^{k-1}u'-|h'|^{k-1}h'][(u-h)_+]' dr\\[5mm]
&\leq \displaystyle\int_{R_0}^\infty r^{n-m-1}[\frac{k u^p}{C_{n-1}^{k-1}}
-m(1-\delta)r^{-k}|u'|^k](u-h)_+ dr.
\end{array}
\end{equation}
By (\ref{jiajia}) and the monotonicity of $u(r)$, there holds
$$
r^{-k}|u'(r)|^k \geq \frac{k}{C_{n-1}^{k-1}}r^{-n}u^p(r)
\int_0^r s^{n-1}ds \geq \frac{ku^p(r)}{nC_{n-1}^{k-1}}.
$$
Taking $m$ suitably large, we obtain that
the right hand side of (\ref{jia2}) is not larger than zero.
In view of the monotony inequality
$(|a|^{k-1}a-|b|^{k-1}b)(a-b) \geq 2^{k-1}|a-b|^{k+1}$, we obtain
from (\ref{jia2}) that
$$
\int_{R_0}^\infty r^{n-k}([(u-h)_+]')^{k+1} dr \leq 0,
$$
which implies $[u(r)-h(r)]_+ \equiv Constant$ for $r \geq R_0$. In view of $u(R_0)=h(R_0)$,
it follows $Constant=0$, which implies $u(r) \leq h(r)$ for $r \geq R_0$.
Applying Lemma \ref{lem2.*1}, we can also see the conclusion of Lemma \ref{lem2.*3}.
\end{proof}

\textbf{Proof of Theorem \ref{th1.3}.}
Let $p>p_{so}$.

{\it Step 1.} By Lemma \ref{lem2.1}, we see that $u(r) \leq Cr^{-\frac{2k}{p-k}}$ for
large $r$. We claim that there exists $c>0$ such that $u(r) \geq cr^{-\frac{2k}{p-k}}$ for
large $r$.

Otherwise, $\lim_{r \to \infty}u(r)r^{\frac{2k}{p-k}}=0$. By Lemma
\ref{lem2.*3} it follows that $u(r)=O(r^{\frac{2k-n}{k}})$ for
large $r$. Thus, $V(x) \in L^{p+1}(R^n) \cap C^2(R^n)$ (here
$V(x)=-u(|x|)$). According to Theorem 4.4 in \cite{Lei2}, we know
$p=p_{so}$, which contradicts with $p>p_{so}$.

{\it Step 2.}
We define by scaling a new function
$$
w(r)=\mu^{\frac{2k}{p-k}}u(\mu r),\quad \mu>0.
$$
By a direct calculation, we see that $w$ still satisfies (\ref{1.1}).
Applying the initial value conditions, we can obtain the second conclusion
of Theorem \ref{th1.3}.

\paragraph{Remark 2.1.}
Let $u_{\mu}(r)$ be a regular solution of (\ref{1.6}) with $p>p_{so}$.
When $p \geq p^*$, Miyamoto used the technique of
phase plane analysis to show that
$u_{\mu}(r)/u_s(r) \to 1$ as $r \to \infty$ (cf. Lemma 2.5 in \cite{YM}).
When $p \geq p_{so}$, Theorem \ref{th1.3} shows that the decay rate of $u_\mu$
is the same as that of $u_s$.
Furthermore, if $\lim_{r \to \infty}u(r)r^{-\frac{2k}{p-k}}$ exists, then
it must be $A$ which is introduced in (\ref{US}). In fact, integrating
(\ref{1.1}) twice yields
$$
u(r)=u(0)-(\frac{k}{C_{n-1}^{k-1}})^{1/k}\int_0^r
[t^{k-n}\int_0^t s^{n-1}u^p(s)ds]^{1/k}dt.
$$
Write $B:=\lim_{r \to \infty}\frac{u(r)}{r^{\frac{2k}{p-k}}}$.
Using the L'Hospital principle twice, we get
$$
B^k=\frac{k}{C_{n-1}^{k-1}}(\frac{2k}{p-k})^{-k}
\frac{\int_0^r s^{n-1}u^p(s)ds}{r^{n-\frac{2pk}{p-k}}}
=\frac{k}{C_{n-1}^{k-1}}(\frac{2k}{p-k})^{-k}
(n-\frac{2pk}{p-k})^{-1}B^p,
$$
which implies $B=A$.

\section{Stable solutions}

\subsection{Proof of Theorem \ref{th1.4}}

{\it Step 1.} We claim that for every
$\gamma\in [1,\frac{2p+2\sqrt{p(p-k)}-k}{k})$ and any integer
$m\geq  max\{{\frac{p+\gamma}{p-k},2}\}$, there exists a constant
$C>0$ such that for any $\psi \in W_*$, there holds
\begin{equation}\label{2.7}
\displaystyle\int^{R}_{0} r^{n-1}u^{p+\gamma}\psi^{m(k+1)}dr\leq
C\int^{R}_{0}(r^{\frac{(n-k)(p+\gamma)-(n-1)(\gamma+k)}{p+\gamma}}
|\psi'|^{k+1})^{\frac{p+\gamma}{p-k}}dr.
\end{equation}

{\it Proof of (\ref{2.7}).}
Let $\psi \in W_*$ be a cut-off function such that
$0 \leq \psi \leq 1$ and
$$
 \psi(r)=
\left\{
 \begin{array}{ll}
 1, \quad if ~r\leq R/2,\\
 0, \quad if ~r\geq R .
 \end{array}
 \right.
 $$
Clearly, there exists a constant $C>0$ such that $|\psi'|\leq \frac{C}{R}$.

Taking $\varphi=u^\gamma \psi^{m(k+1)}$ in (\ref{weak1}), we get
$$\begin{array}{ll}
& \displaystyle\frac{\gamma}{k}C_{n-1}^{k-1}\int^{R}_{0}
r^{n-k}|u'|^{k+1}u^{\gamma-1}\psi^{m(k+1)}dr\\[3mm]
&\quad\leq \displaystyle\frac{k+1}{k}C_{n-1}^{k-1}\int^{R}_{0}
r^{n-k}|u'|^{k}u^\gamma\psi^{mk}|(\psi^m)'|dr +\int^{R}_{0}
r^{n-1}u^{p+\gamma}\psi^{m(k+1)}dr.
\end{array}
$$
Using the Young inequality to the first term of the right hand side,
we can obtain that for any small $\varepsilon>0$,
\begin{equation}\label{xiu2}
\begin{array}{ll}
&(\displaystyle\frac{\gamma}{k}C_{n-1}^{k-1}-\varepsilon^2)\displaystyle\int^{R}_{0}
r^{n-k}u^{\gamma-1}|u'|^{k+1}\psi^{m(k+1)}dr\\[3mm]
&\quad\leq \displaystyle C_{\varepsilon}\int^{R}_{0}
r^{n-k}u^{\gamma+k}|(\psi^m)'|^{k+1}dr+\int^{R}_{0}
r^{n-1}u^{p+\gamma}\psi^{m(k+1)}dr.
\end{array}
\end{equation}

Taking $\varphi=u^{\frac{\gamma+1}{2}}\psi^{\frac{m(k+1)}{2}}$
in (\ref{weak2}), we have
\begin{equation}\label{2.4}
\begin{array}{ll}
&\displaystyle p\int^{R}_{0} r^{n-1}u^{p+\gamma}\psi^{m(k+1)}dr\\[3mm]
&\leq \displaystyle\frac{C_{n-1}^{k-1}(\gamma+1)^2}{4}\int^{R}_{0}
 r^{n-k}u^{\gamma-1}|u'|^{k+1}\psi^{m(k+1)}dr\\[5mm]
&\displaystyle+\frac{C_{n-1}^{k-1}(k+1)^2}{4}\int^{R}_{0}r^{n-k}
|u'|^{k-1}u^{\gamma+1}\psi^{m(k-1)}|(\psi^m)'|^2dr\\[5mm]
&+\displaystyle\frac{C_{n-1}^{k-1}(\gamma+1)(k+1)}{2}\int^{R}_{0}
r^{n-k}|u'|^ku^\gamma\psi^{mk}|(\psi^m)'|dr.
\end{array}
\end{equation}
Using the Young inequality to the second and the third terms
of the right hand side of (\ref{2.4}), we get
\begin{equation}\label{2.5}
\begin{array}{ll}
&\quad\displaystyle p\int^{R}_{0}
r^{n-1}u^{p+\gamma}\psi^{m(k+1)}dr\\[3mm]
&\leq(\displaystyle\frac{C_{n-1}^{k-1}(\gamma+1)^2}
{4}+\varepsilon^2)\int^{R}_{0}r^{n-k}|u'|^{k+1}u^{\gamma-1}
\psi^{m(k+1)}dr\\[3mm]
&+C_{\varepsilon}\displaystyle\int^{R}_{0}
r^{n-k}|(\psi^m)'|^{k+1}u^{\gamma+k}dr.
\end{array}
\end{equation}
Combining (\ref{xiu2}) and (\ref{2.5}), we obtain by the H\"older
inequality that
\begin{equation}\label{xiu3}
\begin{array}{ll}
&\displaystyle[p-(\frac{C_{n-1}^{k-1}(\gamma+1)^2}{4}+\varepsilon^2)
\frac{1}{\frac{\gamma}{k}C_{n-1}^{k-1}-\varepsilon^2}]\int^{R}_{0}
r^{n-1}u^{p+\gamma}\psi^{m(k+1)}dr\\[3mm]
&\leq C \displaystyle\int^{R}_{0}
r^{n-k}|(\psi^m)'|^{k+1}u^{\gamma+k}dr\\[3mm]
&\leq \displaystyle C[\int^{R}_{0}(r^{(n-1)\frac{\gamma+k}{p+\gamma}}
u^{\gamma+k}\psi^{(m-1)(k+1)})
^{\frac{p+\gamma}{\gamma+k}}dr]
^{\frac{\gamma+k}{p+\gamma}}\\[3mm]
&\quad\cdot
[\displaystyle\int^{R}_{0}(r^{\frac{(n-k)(p+\gamma)-(n-1)
(\gamma+k)}{p+\gamma}}|\psi'|^{k+1})^
{\frac{p+\gamma}{p-k}}dr]^{\frac{p-k}{p+\gamma}}.
\end{array}
\end{equation}
In view of $\gamma\in [1,\frac{2p+2\sqrt{p(p-k)}-k}{k})$,
$\lim\limits_{\varepsilon \to 0}
[p-(\frac{C_{n-1}^{k-1}(\gamma+1)^2}{4}+\varepsilon^2)\frac{1}
{\frac{\gamma}{k}C_{n-1}^{k-1}-\varepsilon^2}]
=p-\frac{k(\gamma+1)^2}{4\gamma}>0$.
Therefore, the coefficient of the left hand side of (\ref{xiu3})
is positive as long as $\varepsilon$ is sufficiently small.
Therefore, noting $(m-1)(k+1)\frac{p+\gamma}{\gamma+k}\geq m(k+1)$ which is implied
by $m\geq max\{{\frac{p+\gamma}{p-k},2}\}$,
we can deduce (\ref{2.7}) from (\ref{xiu3}) by the Young inequality.

{\it Step 2.}
By the definition of $\psi$, from (\ref{2.7}) we can deduce that
\begin{equation}\label{2.10}
\int^{R}_{0}r^{n-1}u^{p+\gamma}\psi^{m(k+1)}dr\\[3mm]
\leq CR^{n+1-\frac{(2k+1)(p+\gamma)-(\gamma+k)}{p-k}}.
\end{equation}
When $n+1-\frac{(2k+1)(p+\gamma)-(\gamma+k)}{p-k}<0$, the desired claim follows by
letting $R\rightarrow \infty$.

Consider a real-valued function
$$
f(t)=\frac{(2k+1)(t+\gamma(t))-(\gamma(t)+k)}{t-k},\quad t \in (k,\infty),
$$
where $\gamma(t)=\frac{2t+2\sqrt{t(t-k)}-k}{k}$.
Clearly, we know $f(t)$ is a strictly decreasing function (by virtue of
$f'(t)<0$ on $(k,\infty)$),
satisfying $\lim\limits_{t\rightarrow k}f(t)=\infty$ and $\lim\limits_{t\rightarrow
\infty}f(t)=2k+9$. Therefore, we consider separately two cases:
$n\leq 2k+8$, and $n\geq 2k+9$.

Case I: $n\leq 2k+8$. In view of $p>p_{se}$, there exists
$\gamma\in [1,\frac{2p+2\sqrt{p(p-k)}-k}{k})$ such that $n+1-\frac{(2k+1)
(p+\gamma)-(\gamma+k)}{p-k}<0$ is true.

Case II: $n\geq 2k+9$. In view of $p>p_{se}$, there exists a unique $p_{0}>k$
such that $n+1=f(p_{0})$ since $f(t)$ is decreasing in $(k,\infty)$.
Therefore, $p_{0}$ satisfies
\begin{equation}\label{2.11}
(n-2k)(n-2k-8)p^2_{0}-2k[n^2-2(k+3)n+4k]p_{0}+k^2(n-2)^2=0,
\end{equation}
and
\begin{equation}\label{2.12}
(n-2k-4)p_{0}-(n-2)k>4(p_{0}-k).
\end{equation}
The roots of equation (\ref{2.11}) are
\begin{equation}\label{2.13}
p_{1}=\frac{k[n^2-2(k+3)n+4k]+4k\sqrt{2(k+1)n-4k}}{(n-2k)(n-2k-8)},
\end{equation}
\begin{equation}\label{2.14}
p_{2}=\frac{k[n^2-2(k+3)n+4k]-4k\sqrt{2(k+1)n-4k}}{(n-2k)(n-2k-8)}.
\end{equation}
Inequality (\ref{2.12}) implies $p_{0}>p_{2}$, and hence we take
$p_{0}=p_{1}$ (it equals exactly $p_{jl}$). Thus, when $p<p_{jl}$,
there exists $\gamma\in [1,\frac{2p+2\sqrt{p(p-k)}-k}{k})$
satisfying $n+1-\frac{(2k+1) (p+\gamma)-(\gamma+k)}{p-k}<0$.

No matter in Case I or Case II, letting $R\rightarrow\infty$ in (\ref{2.10}),
we can deduce $\int^{R}_{0}r^{n-1}u^{p+\gamma}dr \rightarrow 0$.
This contradiction shows that (\ref{1.1}) has no positive stable solution
as long as $p<p_{jl}$.

\subsection{Proof of Theorem \ref{th1.5}}

Let $u_{s}$ be the singular solution of (\ref{1.1}) given by (\ref{US}).
We will prove that the singular solution $u_{s}(r)$ is stable
when $n \geq 2k+9$ and $p\geq p_{jl}$.

First, we claim that $u_s$ satisfies (\ref{weak1}). In fact, by (\ref{serrin}),
the improper integral
$
\int_0^\infty r^{n-1}u_s^p \varphi dr \leq C\int_0^R r^{n-1-\frac{2pk}{p-k}}dr
<\infty.
$
Similarly, the left hand side of (\ref{weak1})
also makes sense. In addition, $u_s$ solves (\ref{1.1}). Multiply by the test function
$\varphi \in W_*$ and integrate from $0$ to $\infty$.
Noting $r^{n-k}|u'_s(r)|^k \to 0$ as $r \to 0$,
we know that the claim is true.

To prove that $u_{s}$ satisfies (\ref{weak2}), we observe firstly that
\begin{equation}\label{3.15}
\begin{array}{ll}
&\displaystyle p(\frac{2}{p-k})(n-\frac{2pk}{p-k})\leq
\frac{(n-2-\frac{2p(k-1)}{p-k})^2}{4}\\[5mm]
&\Leftrightarrow 8n(p^2-kp)-16kp^2\leq (n-2)^2(p^2-2kp+k^2)\\[3mm]
&\quad +4(k-1)^2p^2-4(k-1)(n-2)(p^2-kp)\\[3mm]
&\Leftrightarrow (n-2k)(n-2k-8)p^2-2k
(n^2-2(k+3)n+4k)p\\[3mm]
&\quad +k^2(n-2)^2 \geq 0\\[3mm]
&\Leftrightarrow p\in(-\infty, p_{2}]\bigcup[p_{jl},+\infty)
\end{array}
\end{equation}
where $p_{2}$ is defined in (\ref{2.14}). On the other hand, by
Definition 1.2, we have that for any $\phi \in C_c^\infty(R^n)$,
$$\begin{array}{ll}
&\displaystyle C_{n-1}^{k-1}\int_{R^{n}}\frac{1}{|x|^{k-1}}
|u_{s}'|^{k-1}|\nabla \phi|^2 dx -
p\displaystyle\int_{R^{n}}u_{s}^{p-1}\phi^2 dx\\[5mm]
&=\displaystyle C_{n-1}^{k-1}\int_{R^{n}}(\frac{1}{k}C_{n-1}^{k-1})
^{\frac{k-1}{p-k}}(\frac{2k}{p-k})^{\frac{(k-1)p}{p-k}}
(n-\frac{2pk}{p-k})^{\frac{k-1}{p-k}}
\frac{1}{|x|^{\frac{2p(k-1)}{p-k}}}|\nabla \varphi|^2 dx\\[5mm]
&\quad\quad\quad\quad -p\displaystyle\int_{R^{n}}(\frac{1}{k}C_{n-1}^{k-1})
^{\frac{p-1}{p-k}}(\frac{2k}{p-k})
^{\frac{(p-1)k}{p-k}}(n-\frac{2pk}{p-k})
^{\frac{p-1}{p-k}}\frac{1}{|x|^{\frac{2(p-1)k}{p-k}}}\varphi^2 dx\\[5mm]
&=C_{0}(\displaystyle\int_{R^{n}}\frac{1}{|x|^{\frac{2p(k-1)}{p-k}}}|\nabla\phi|^2 dx
-p(\frac{2}{p-k})(n-\frac{2pk}{p-k})\displaystyle\int_{R^{n}}\frac{1}
{|x|^{\frac{2(p-1)k}{p-k}}}\phi^2)dx,
\end{array}
$$
where
\begin{equation}\label{3.16}
C_{0}=\displaystyle C_{n-1}^{k-1}(\frac{1}{k}C_{n-1}^{k-1})
^{\frac{k-1}{p-k}}(\frac{2k}{p-k})^{\frac{(k-1)p}{p-k}}(n-\frac{2pk}{p-k})
^{\frac{k-1}{p-k}}.
\end{equation}
By $p\geq p_{jl}$, (\ref{3.15}) implies that
$$
\begin{array}{ll}
&\displaystyle\int_{R^{n}}\frac{1}{|x|^{\frac{2p(k-1)}{p-k}}}|\nabla\phi|^2dx
-p(\frac{2}{p-k})(n-\frac{2pk}{p-k})\displaystyle\int_{R^{n}}\frac{1}
{|x|^{\frac{2(p-1)k}{p-k}}}\phi^2dx\\[5mm]
&\geq \displaystyle\int_{R^{n}}\frac{1}{|x|^{\frac{2p(k-1)}{p-k}}}|\nabla\phi|^2dx
-\frac{(n-2-\frac{2p(k-1)}{p-k})^2}{4}\displaystyle\int_{R^{n}}\frac{1}
{|x|^{\frac{2(p-1)k}{p-k}}}\phi^2dx.
\end{array}
$$
It follows that
\begin{equation}\label{xuanyi}
Q_{u_{s}}(\varphi)> 0, \quad \forall \varphi\in W_*
\end{equation}
by the Caffarelli-Kohn-Nirenberg inequality (cf. \cite{CC})
\begin{equation}\label{CKN}
\int_{R^{n}}\frac{|\nabla \phi|^2}{|x|^{2a}} dx >
C_{a,b}\int_{R^{n}}\frac{\phi^2}{|x|^{2b}} dx,\quad
\forall \phi \in D^{1,2}_{a}(R^{n}),
\end{equation}
where $n\geq3$, $0\leq a<\frac{n-2}{2}$ and $a\leq b\leq a+1$, the
best constant $C_{a,b}$ is given by
$C_{a,b}=\displaystyle\frac{(n-2-2a)^2}{4}$. Here we take
$a=\frac{p(k-1)}{p-k}$ and $b=a+1$. This result shows that $u_{s}$
is a stable solution of (\ref{1.1}) when $n \geq 2k+9$ and $p\geq
p_{jl}$. The proof of Theorem \ref{th1.5} is complete.

\subsection{Proof of Theorem \ref{th1.6}}

{\it Step 1.}
When $p=p_{so}$,
all regular solutions $u_\rho$ of (\ref{1.6}) can be written as the form given
by (\ref{expl}). When $r$ is suitably large,
\begin{equation}\label{Ur}
u_\rho(r) \leq D_1 r^{-\frac{n-2k}{k}}, \quad |u'_\rho| \geq D_2 r^{-\frac{n-k}{k}},
\end{equation}
where $D_1,D_2$ are positive constants independent of $r$.
Thus,
$$
pu^{p-1}(r)=O(r^{-\frac{(k-1)n}{k}-4}),\quad as~ r \rightarrow
\infty.
$$
Therefore, we can find some $R>0$ such that for all $|x|>R$ and
$\phi \in C_c^\infty(R^n\setminus \overline{B_R(0)})$, there holds
$$
pu_{\rho}^{p-1}(|x|)\phi^2(x) < C^*|x|^{-\frac{(k-1)n}{k}-2}\phi^2(x),
$$
where $C^*=\frac{(n-2-\frac{k-1}{k}n)^2}{4}D_2^{k-1}C_{n-1}^{k-1}(\frac{n-2k}{k})^{k-1}$.
Thus,
\begin{equation}\label{Su}
\begin{array}{ll}
&\displaystyle C_{n-1}^{k-1}\int_{{R}^{n}}\frac{1}{|x|^{k-1}}
|u_{\rho}'|^{k-1}|\nabla \phi|^2dx -
p\displaystyle\int_{{R}^{n}}u_{\rho}^{p-1}\phi^2dx\\[5mm]
&\geq\displaystyle D_2^{k-1} C_{n-1}^{k-1}(\frac{n-2k}{k})^{k-1}\int_{{R}^{n}}
\frac{1}{|x|^{\frac{k-1}{k}n}}|\nabla \phi|^2dx\\[3mm]
&\quad  -C^*\displaystyle\int_{{R}^{n}}
\frac{1}{|x|^{\frac{k-1}{k}n+2}}\phi^2dx\\[3mm]
&=\displaystyle D_2^{k-1} C_{n-1}^{k-1}(\frac{n-2k}{k})^{k-1}(\displaystyle\int_{{R}^{n}}
\frac{1}{|x|^{\frac{k-1}{k}n}}|\nabla \phi|^2dx\\[3mm]
&\quad -\displaystyle\frac{(n-2-\frac{k-1}{k}n)^2}{4}
\displaystyle\int_{{R}^{n}}\frac{1}{|x|^{\frac{k-1}{k}n+2}}\phi^2)dx,
\end{array}
\end{equation}
and the right hand side is nonnegative by the
Caffarelli-Kohn-Nirenberg inequality (\ref{CKN}) with $a=\frac{k-1}{2k}n$
and $b=a+1$. Therefore, $Q_{u_{\rho}}(\varphi) \geq 0$ for every
$\varphi \in C_c^\infty(R,\infty)$. In addition, $u_\rho$ also satisfies
(\ref{weak1}). So the regular solution $u_{\rho}$ is stable on $(R, \infty)$.

{\it Step 2.}
Let $u_\mu$ (see (\ref{xiu0})) be a regular solution of (\ref{1.6})
with $p \geq \max\{p^*,p_{jl}\}$.
We claim that $u_\mu$ is stable on $(R,\infty)$ for some $R>0$.

We at first prove $\lim\limits_{r\rightarrow\infty}u'_{\mu}(r)/u'_{s}(r)=1$
when $p \geq \max\{p^*,p_{jl}\}$.

Clearly, $u'_{s}=-(\frac{1}{k}C_{n-1}^{k-1})^{\frac{1}{p-k}}(\frac{2k}{p-k})^{\frac{p}{p-k}}
(n-\frac{2pk}{p-k})^{\frac{1}{p-k}}r^{-\frac{p+k}{p-k}}$.

Combining with (\ref{jiajia}) and
using the L'Hospital principle, we get
$$
\begin{array}{ll}
\displaystyle\lim_{r\rightarrow\infty}(\frac{u'_{\mu}}{u'_{s}})^k&=\lim\limits_{r\rightarrow\infty}\frac{
\displaystyle\int^r_{0}s^{n-1}u_{\mu}^p(s)ds}
{(\frac{1}{k}C_{n-1}^{k-1})^{\frac{p}{p-k}}(\frac{2k}{p-k})^{\frac{pk}{p-k}}(n-\frac{2pk}{p-k})
^{\frac{k}{p-k}}r^{n-\frac{2pk}{p-k}}}\\[5mm]
&=\displaystyle\lim_{r\rightarrow\infty}\frac{r^{n-1}u_{\mu}^p(r)}{(\frac{1}{k}C_{n-1}^{k-1})^{\frac{p}{p-k}}
(\frac{2k}{p-k})^{\frac{pk}{p-k}}(n-\frac{2pk}{p-k})
^{\frac{p}{p-k}}r^{n-\frac{2pk}{p-k}-1}}\\[5mm]
&=\displaystyle\lim_{r\rightarrow\infty}\frac{u_{\mu}^p(r)}{u_s^p(r)}.
\end{array}
$$
By Remark 2.1, there holds
$\lim\limits_{r\rightarrow\infty}u_{\mu}'(r)/u'_{s}(r)=1$ when $p \geq
\max\{p^*,p_{jl}\}$. Thus, there exists sufficiently large $R>0$
such that as $r>R$,
$$
|u_{\mu}'(r)|^{k-1}=|u'_s(r)|^{k-1}+o(1)r^{-\frac{(k-1)(p+k)}{p-k}}.
$$
Therefore, by the strict inequality (\ref{xuanyi}), we can find a suitably small $\delta_0>0$
such that for any $\psi \in C_c^\infty(R^n \setminus \overline{B_R(0)})$,
$$\begin{array}{ll}
&C_{n-1}^{k-1}\displaystyle\int_{R^n}\frac{|u'_{\mu}(|x|)|^{k-1}}{|x|^{k-1}}|\nabla \phi|^2dx\\[3mm]
&=C_{n-1}^{k-1}\displaystyle\int_{R^n}\frac{|u'_s(|x|)|^{k-1}
+o(1)|x|^{-\frac{(k-1)(p+k)}{p-k}}}{|x|^{k-1}}|\nabla \phi|^2dx\\[3mm]
&\geq C_0\displaystyle[p(\frac{2}{p-k})(n-\frac{2pk}{p-k})+\delta_0+o(1)]
\displaystyle\int_{R^{n}}\frac{1}{|x|^{\frac{2(p-1)k}{p-k}}}\phi^2 dx\\[3mm]
&\geq p \displaystyle\int_{R^n}u_s(|x|)^{p-1}\phi^2dx.
\end{array}
$$
Here $C_0$ is the constant in (\ref{3.16}).
In view of $u_s(r)>u_{\mu}(r)$ for $r>R$ (see Remark 1.3), we can see $Q_{u_{\mu}}(\varphi)
\geq 0$ for any $\varphi \in C_c^\infty(R,\infty)$. In addition, $u_{\mu}$ satisfies
(\ref{weak1}). Thus, $u_{\mu}$ is stable on $(R,\infty)$ for some $R>0$.

{\it Step 3.} Let $U_s$ be a singular solution of (\ref{1.1}) with $p
\in (p_{se},p_{so})$ introduced in Remark 1.1. By the same as in the proof of
Theorem \ref{th1.5}, $U_s$ still satisfies (\ref{weak1}) since
$0$ is not the singular point in the improper integrals of (\ref{weak1})
which is implied by $\lim_{r \to 0}U_s(r)/u_s(r)=1$.

In addition, by an analogous
argument in Step 1, $U_s$ still satisfies (\ref{weak2}). In fact,
$\lim_{r \to \infty}U_s(r)r^{\frac{n-2k}{k}}=\lambda$ implies
\begin{equation}\label{jia5}
U_s(r) \leq Cr^{-\frac{n-2k}{k}} \quad for~large~r.
\end{equation}
On the other hand, by (\ref{jiajia}), the monotonicity of $U_s$,
and (\ref{jia5}), there holds
$$
|U_s'|^k \geq cr^{k-n}U_s^p(r)\int_0^r s^{n-1}ds
\geq cr^{k-p\frac{n-2k}{k}}
$$
for large $r$. Therefore, applying the Caffarelli-Kohn-Nirenberg inequality
(\ref{CKN}) with $a=p\frac{n-2k}{k}\frac{k-1}{2k}$ and $b=a+1$, we obtain by (\ref{jia5})
and $p>p_{se}$ that
$$\begin{array}{ll}
&\displaystyle\int_{R^n}\frac{|U_s'(|x|)|^{k-1}}{|x|^{k-1}}|\nabla \phi|^2dx
\geq c\int_{R^n}\frac{\phi^2 dx}{|x|^{p\frac{n-2k}{k}\frac{k-1}{k}+2}}\\[3mm]
&\geq c\displaystyle\int_{R^n}\frac{\phi^2 dx}{|x|^{(p-1)\frac{n-2k}{k}}}
\geq p\int_{R^n}U_s^{p-1}(|x|)\phi^2dx
\end{array}
$$
for any $\phi \in C_c^\infty(R^n \setminus \overline{B_R(0)})$
with suitably large $R$.


\end{document}